\numberwithin{equation}{section}
\newcommand{\vect}[1]{\mathbf{#1}}
\newcommand{\bm}[1]{\boldsymbol{#1}}
\newcommand{\mean}{\operatornamewithlimits{\mathbb E}}
\newcommand{\V}{\mathscr{H}_{\vect n,\vect d}}
\newcommand{\Xnd}{\mathbb X_{\vect n, \vect d}}
\newcommand{\Vnd}{\mathbb V_{n, d}}
\theoremstyle{plain}
\newtheorem{theorem}{Theorem}[section]
\newtheorem{lemma}[theorem]{Lemma}
\newtheorem{corollary}[theorem]{Corollary}
\newtheorem{proposition}[theorem]{Proposition}
\theoremstyle{definition}
\newtheorem{remark}[theorem]{Remark}
\newtheorem{example}[theorem]{Example}
\title{Reach of Segre--Veronese Manifolds}
\author[P.~Breiding]{Paul Breiding}
\address[Breiding]{University Osnabr\"uck,
FB 06 Mathematik/Informatik/Physik
Albrechtstr.~28a,
49076 Osnabrück, Germany}
\email{pbreiding@uni-osnabrueck.de}
\author[S.~Eggleston]{Sarah Eggleston}
\address[Eggleston]{University Osnabr\"uck,
FB 06 Mathematik/Informatik/Physik
Albrechtstr.~28a,
49076 Osnabrück, Germany}
\email{seggleston@uni-osnabrueck.de}
\thanks{
The first author is supported by Deutsche Forschungsgemeinschaft (DFG) -- Projektnr.\ 445466444}
\begin{document}

\maketitle

\begin{abstract}
We compute the reach, extremal curvature and volume of a tubular neighborhood for the Segre--Veronese variety intersected with the unit sphere.
\smallskip

\noindent \textbf{Keywords.} Tensors, Rank-One Tensors, Reach, Curvature, Tubes.
\end{abstract}

\section{Introduction}
In this paper we study the metric geometry of rank-one tensors. More specifically, we compute the reach and the volume of a tubular neighborhood of the \emph{Segre--Veronese variety}, i.e.~the variety of rank-one tensors in the space of partially symmetric tensors. Since rank-one tensors form a cone, we intersect the Segre--Veronese variety with the unit sphere, thus obtaining the \emph{(spherical) Segre--Veronese manifold}; the proof that this is a manifold is provided below. We first describe the setting.

Let $H_{n,d}$ denote the vector space of homogeneous polynomials in $n+1$ variables $x_0,\ldots, x_n$ of degree $d$.
We consider the Bombieri-Weyl inner product $\langle\ ,\ \rangle$ on $H_{n,d}$: this is the inner product corresponding to the orthogonal basis vectors $\vect m_\alpha := \sqrt{\tbinom{d}{\alpha}}\,\vect x^{\alpha}$, where $\tbinom{d}{\alpha}  = \frac{d!}{\alpha_0!\cdots \alpha_n!}$ is the multinomial coefficient for $\alpha = (\alpha_0,\ldots,\alpha_n)$. The reason for this choice is that the Bombieri-Weyl inner product is invariant under an orthogonal change of coordinates; this was proved by Kostlan \cite{kostlan:93, Kostlan}.
The norm of a polynomial $\vect f\in H_{n,d}$ is  $\Vert \vect f\Vert  =\sqrt{\langle \vect f, \vect f\rangle}$, and the sphere is $\mathbb S(H_{n,d}):=\{\vect f\in H_{n,d} \mid\Vert \vect f\Vert =1\}$. 

For~$n,d\geq 1$, we denote the real variety of powers of linear forms in $H_{n,d}$ by
$$\widehat{\mathbb V}_{n,d} := \{\pm \, \bm\ell^d\mid \bm\ell \text{ is a linear form in } x_0,\dots,x_n\}.$$
The hat indicates that $\widehat{\mathbb V}_{n,d}$ is the cone over a spherical variety $$\Vnd:=\widehat{\mathbb V}_{n,d}\cap \mathbb S(H_{n,d}),$$ which we call the \emph{(spherical) Veronese variety}. Its dimension is $\dim \Vnd = n$.

We fix $r$-tuples of positive integers $\vect d=(d_1,\ldots,d_r)$ and $\vect n=(n_1,\ldots,n_r)$ and write 
$$\V := H_{n_1,d_1} \otimes \cdots \otimes  H_{n_r,d_r}.$$
The elements in $\V$ are called 
\emph{partially symmetric tensors}. They are multihomogeneous forms in $r$ sets of variables. The number $d=d_1+\cdots +d_r$ is called the \emph{total degree} of the tensors in~$\V$. 
For a tensor $\vect F = \sum_{\alpha_1,\ldots,\alpha_r}  F_{\alpha_{1},\ldots, \alpha_{r}}\; \vect m_{\alpha_{1}} \otimes \cdots\otimes\vect m_{\alpha_{r}}\in \V$, we use the short form $\vect F = (F_{\alpha_{1},\ldots, \alpha_{r}})$.
The following defines an inner product on $\V$:
$$\langle \vect F, \vect G \rangle := \sum_{\alpha_1,\ldots,\alpha_r} F_{\alpha_{1},\ldots, \alpha_{r}} \cdot G_{\alpha_{1},\ldots, \alpha_{r}}, \quad \text{where } \vect  F = (F_{\alpha_{1},\ldots, \alpha_{r}}), \vect G = (G_{\alpha_{1},\ldots, \alpha_{r}})\in\V.$$
With this, $\V$ becomes a Euclidean space, and we can measure volumes and distances in $\V$. The norm of a tensor $\vect F\in\V$ is $\Vert \vect F\Vert := \sqrt{\langle \vect F,\vect F\rangle}$, and the angular distance is~$d_{\mathbb S}(\vect F, \vect G):= \arccos \langle \vect F,\ \vect G\rangle$ for  $\vect F,\vect G$ in the unit sphere $ \mathbb S(\V)\subset \V$.

The \emph{(spherical) Segre--Veronese variety} in $\V$ is 
\begin{equation}\label{def_segre_veronese}
\Xnd := \left\{\vect f_1\otimes \cdots \otimes \vect f_r\mid \vect f_i\in \widehat{\mathbb V}_{n_i,d_i}\right\}\cap \mathbb S(\V).
\end{equation}
This is the variety of products of powers of linear forms in $\V$ that have unit norm. Tensors in~$\Xnd$ are also called \emph{decomposable}, \emph{simple} or \emph{rank-one} tensors. We prove in Proposition \ref{prop_helpful} that $\Xnd$ is an embedded smooth submanifold of $\mathbb S(\V)$ of dimension $\dim \Xnd=n_1+\cdots+n_r$; hence, we call $\Xnd$ a (spherical) Segre--Veronese \emph{manifold}.

The main focus of this paper is the {reach} and the volume of a {tubular neighborhood} of~$\Xnd$. We briefly recall what these are: the \emph{medial axis} $\mathrm{Med}(S)$ of a subset $S\subset \mathbb S(\V)$ is the set of all points $\vect F\in\mathbb S(\V)$ such that there exist at least two points $\vect G_1,\vect G_2 \in S$ with $d_{\mathbb S}(\vect F, S) = d_{\mathbb S}(\vect F, \vect G_i)$, $i=1,2$. The \emph{reach} of a subset $S$ is its minimum distance to the medial axis:
$$\tau(S):=\inf_{\vect F \in S}d_{\mathbb S}(\vect F, \mathrm{Med}(S)).$$
This notion was first introduced by Federer \cite{reach}. 

In our first main theorem we calculate the reach of the (spherical) Segre--Veronese manifold.
\begin{theorem}[The reach of (spherical) Segre--Veronese manifolds]\label{main:thm}
Let $\vect d=(d_1,\ldots,d_r)$ and $\vect n=(n_1,\ldots,n_r)$ be $r$-tuples of positive integers, and let $d:=d_1+\cdots+ d_r\geq 2$ be the total degree. The reach of the (spherical) Segre--Veronese manifold is 
$$\tau(\Xnd)= 
\begin{cases} 
\frac{\pi}{4}, & d\leq 5\\[0.4em]
\sqrt{\frac{d}{2(d-1)}}, & d>5.
\end{cases}$$
In particular, the reach only depends on the total degree $d$ and not on the dimensions of the Veronese varieties $\mathbb V_{n_i,d_i}$. 
\end{theorem}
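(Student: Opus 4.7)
The plan is to decompose the reach into two ingredients and evaluate each separately. By Federer's original characterization adapted to the spherical setting, the reach of a compact smooth submanifold $M\subset\mathbb S(\V)$ equals $\min(\tau_{\mathrm{loc}},\tau_{\mathrm{bot}})$, where $\tau_{\mathrm{loc}}$ is the local reach, determined by the maximum principal curvature of $M$ inside $\mathbb S(\V)$, and $\tau_{\mathrm{bot}}$ is the bottleneck reach, realized by pairs of distinct points of $M$ sharing a common nearest point on the medial axis. The theorem will follow by showing that $\tau_{\mathrm{loc}}=\sqrt{d/(2(d-1))}$ and $\tau_{\mathrm{bot}}=\pi/4$ and taking the minimum.

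For the curvature term, I would first invoke a symmetry reduction. By Kostlan's invariance of the Bombieri--Weyl inner product, the product group $O(n_1+1)\times\cdots\times O(n_r+1)$ acts on $\V$ by isometries via change of coordinates in each tensor factor; this action preserves $\Xnd$ and is transitive on it. Hence the second fundamental form is the same (up to isometry) at every point of $\Xnd$, and it suffices to work at a convenient base point, for instance $\vect F_0:=x_0^{d_1}\otimes\cdots\otimes x_0^{d_r}$, which has Bombieri--Weyl norm one. The tangent space at $\vect F_0$ decomposes as an orthogonal direct sum
\[
T_{\vect F_0}\Xnd=\bigoplus_{i=1}^r \bigl(T_{x_0^{d_i}}\mathbb V_{n_i,d_i}\bigr)\otimes\Bigl(\bigotimes_{j\neq i}x_0^{d_j}\Bigr),
\]
each summand being spanned by derivatives of $\bm\ell\mapsto\bm\ell^{d_i}$ along perturbations of $x_0$. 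Differentiating the Segre--Veronese parametrization twice and projecting to the normal space produces an explicit second fundamental form whose extremal eigenvalue I would maximize via a quadratic-form calculation. The resulting Euclidean principal curvature turns out to depend only on the total degree $d$, and the standard conversion from Euclidean principal curvatures to spherical reach yields $\tau_{\mathrm{loc}}=\sqrt{d/(2(d-1))}$.

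For the bottleneck term, an explicit upper bound is given by the orthogonal pair $\vect F=x_0^{d_1}\otimes\cdots\otimes x_0^{d_r}$ and $\vect G=x_1^{d_1}\otimes\cdots\otimes x_1^{d_r}$: these are unit tensors with $\langle\vect F,\vect G\rangle=0$, so $d_{\mathbb S}(\vect F,\vect G)=\pi/2$, and their spherical midpoint is equidistant to both at distance $\pi/4$, giving $\tau_{\mathrm{bot}}\le\pi/4$. The matching lower bound requires ruling out closer bottleneck pairs. Using the multilinear identity $\langle\vect f_1\otimes\cdots\otimes \vect f_r,\vect g_1\otimes\cdots\otimes\vect g_r\rangle=\prod_i\langle\vect f_i,\vect g_i\rangle$, I would set up a Lagrange-multiplier problem for the critical pairs of the squared geodesic distance on $\Xnd\times\Xnd$ subject to the bottleneck condition that each point is a foot of the spherical midpoint, reducing it to a single-parameter optimization in each Veronese factor. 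The conclusion should be that every critical bottleneck pair is forced to angular separation at least $\pi/2$, hence $\tau_{\mathrm{bot}}\ge\pi/4$.

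Combining the two estimates gives $\tau(\Xnd)=\min(\sqrt{d/(2(d-1))},\pi/4)$. Since the function $d\mapsto\sqrt{d/(2(d-1))}$ is strictly decreasing from $1$ at $d=2$ to $1/\sqrt{2}$ as $d\to\infty$, and the inequality $\sqrt{d/(2(d-1))}<\pi/4$ is equivalent to $d>\pi^2/(\pi^2-8)\approx 5.28$, the crossover occurs between $d=5$ and $d=6$, producing the case split of the theorem; in particular the reach depends only on $d$, as asserted. The main obstacle I anticipate is the bottleneck lower bound: while the curvature computation is an essentially algebraic exercise reducible to a quadratic form at a single base point, classifying all bottleneck critical pairs on $\Xnd\times\Xnd$ requires careful global control of the multilinear structure, together with a case analysis handling degenerate situations in which some factor-wise inner products $\langle\vect f_i,\vect g_i\rangle$ vanish while others do not.
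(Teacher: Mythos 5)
Your proposal follows the paper's architecture exactly: $\tau(\Xnd)=\min\{\rho_1,\rho_2\}$, with the curvature term computed at the distinguished point $\vect E$ after reducing by the isometric $O(n_1+1)\times\cdots\times O(n_r+1)$ action (Lemma \ref{lem_hom_space}, Section \ref{ext_curv}), and the bottleneck term shown to equal $\pi/4$ (Section \ref{bottlenecks}). Two differences in execution. First, the bottleneck lower bound, which you flag as the main obstacle and attack with a Lagrange-multiplier analysis over pairs, is actually the easy part in the paper: imposing normality only at $\vect E$, writing $\vect F=\bm\ell_1^{d_1}\otimes\cdots\otimes\bm\ell_r^{d_r}$, and using the reproducing-kernel identity (\ref{RKH}) together with (\ref{inner_prod_rankone}), one finds that $\vect F-\vect E\perp T_{\vect E}\Xnd$ forces either $\vect F=\vect E$ or $\langle\bm\ell_i,x_0\rangle=0$ for some $i$, whence $\langle\vect F,\vect E\rangle=0$ and the separation is exactly $\pi/2$; no case analysis of partially vanishing factor inner products is needed. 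Second, your ``quadratic-form calculation'' at $\vect F_0$ conceals the real computation: the constrained maximization over how a unit-speed geodesic distributes its speed among the $r$ factors (maximizing $2\sum_i(\theta_i^2-\theta_i^4/d_i)$ subject to $\sum_i\theta_i^2=1$), which is precisely what makes the answer depend only on the total degree $d$.

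The genuine gap is the step you declare routine: ``the standard conversion from Euclidean principal curvatures to spherical reach yields $\tau_{\mathrm{loc}}=\sqrt{d/(2(d-1))}$.'' No standard conversion returns $1/\kappa_{\max}$ as a spherical reach. In the sphere with geodesic distance, the first focal point along a unit normal $\nu$ with normal curvature $\kappa_\nu>0$ lies at distance $\arctan(1/\kappa_\nu)$, not $1/\kappa_\nu$: for $q_t=\cos(t)\,p+\sin(t)\,\nu$ and a unit-speed geodesic $\gamma$ of the submanifold with $\gamma(0)=p$,
\begin{equation*}
\langle q_t,\gamma(s)\rangle=\cos t+\tfrac{s^2}{2}\bigl(\kappa_\nu\sin t-\cos t\bigr)+O(s^3),
\end{equation*}
so $p$ fails to be a local nearest point of $q_t$ as soon as $\tan t>1/\kappa_\nu$. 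Applied with $\kappa_{\max}=\sqrt{2(d-1)/d}$, this ``standard'' reasoning gives the local term $\arctan\sqrt{d/(2(d-1))}$, which is at most $\pi/4$ for every $d\geq2$; your route would therefore never produce the crossover at $d=5$ that you compute at the end. The tension is already visible for $\Vnd$ with $n=1$, $d=3$: taking $p=x_0^3$ and $\nu=\sqrt3\,x_0x_1^2$, the function $\theta\mapsto\langle q_t,(\cos\theta\,x_0+\sin\theta\,x_1)^3\rangle=\cos t\,\cos^3\theta+\sqrt3\,\sin t\,\cos\theta\sin^2\theta$ is even in $\theta$ and, for $\arctan(\sqrt3/2)<t<\pi/4$, attains its maximum at a symmetric pair $\pm\theta^*\neq0$ of parameters, so these $q_t$ are equidistant from two distinct points of the curve at distance less than $\pi/4$. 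In other words, the identity $\rho_1=1/\kappa_{\max}$ for the spherical reach, which the paper imports from \cite{CLR2023} as the starting point of Section \ref{sec:reach}, is exactly the nontrivial claim on which the whole theorem rests; a complete proof must establish it (and reconcile it with the focal-distance computation above), and your proposal supplies no argument whatsoever at this point.
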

This extends a theorem by Cazzaniga, Lerario and Rosana \cite{CLR2023}, who proved this formula for the Veronese variety, which is the special case $r=1$. Another special case worth mentioning is $d_1=\cdots=d_r=1$, which corresponds to the \emph{Segre manifold}.

Since $\Xnd$ is a smooth submanifold of the sphere, its reach is the minimum of the inverse of its maximal curvature and its smallest bottleneck. We also compute these. The next theorem explains which curves in $\Xnd$ have maximal and minimal curvature; this is proved in Section \ref{ext_curv}.

\bigskip
\begin{theorem}[Extremal curvature of curves in Segre--Veronese manifolds]\label{main_thm_curv}
Let the total degree of the (spherical) Segre--Veronese manifold $\Xnd$ be $d=d_1+\cdots+d_r\geq 2$.  Consider a geodesic 
$$\gamma(t) = \gamma_1(t)\otimes \cdots \otimes \gamma_r(t) \in \Xnd.$$
\begin{enumerate}\item 
The maximum curvature of $\gamma(t)$ is $\sqrt{\tfrac{2(d-1)}{d}}.$ It is attained by geodesics where the~$\gamma_i(t)$ are geodesics  in $\mathbb V_{n_i,d_i}$ of constant speed $\Vert \gamma_i'(t)\Vert = \sqrt{\tfrac{d_i}{d}}$. \\[0.2em]
\item 
The minimal curvature is $\sqrt{\tfrac{2(d_\ell-1)}{d_\ell}}$, where $d_\ell=\min\{d_1,\ldots,d_r\}$. 
It is attained by geodesics where $\gamma_\ell(t)$ is a geodesic 
parametrized by arc length in $\mathbb V_{n_\ell,d_\ell}$ and the other $\gamma_i(t)$ are constant.
\end{enumerate} 
\end{theorem}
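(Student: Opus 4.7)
The approach is to identify $\Xnd$ as locally a Riemannian product and then reduce the curvature extremization to a convex optimization on a simplex.

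First, I would observe that the tangent space at $\gamma = \gamma_1 \otimes \cdots \otimes \gamma_r \in \Xnd$ decomposes as an orthogonal direct sum
$$T_\gamma \Xnd = \bigoplus_{i=1}^r \gamma_1 \otimes \cdots \otimes T_{\gamma_i}\mathbb V_{n_i,d_i} \otimes \cdots \otimes \gamma_r,$$
with each summand isometric to the corresponding Veronese tangent space because $\|\gamma_j\| = 1$ for $j\neq i$. Hence $\Xnd$ is locally isometric to the Riemannian product $\prod_i \mathbb V_{n_i, d_i}$, and any geodesic in $\Xnd$ takes the form $\gamma(t) = \gamma_1(t) \otimes \cdots \otimes \gamma_r(t)$ with each $\gamma_i$ a constant-speed geodesic in $\mathbb V_{n_i, d_i}$ of speed $v_i := \|\gamma_i'\|$, subject to $\sum_i v_i^2 = 1$.

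Second, I would compute the ambient acceleration by Leibniz as
$$\gamma'' = \sum_i A_i + 2 \sum_{i<j} B_{ij},$$
where $A_i$ has $\gamma_i''$ in slot $i$ and $\gamma_k$ elsewhere, and $B_{ij}$ has $\gamma_i',\gamma_j'$ in slots $i,j$ and $\gamma_k$ elsewhere. Using $\|\gamma_j\|=1$, $\langle\gamma_j,\gamma_j'\rangle=0$, and $\langle\gamma_j,\gamma_j''\rangle=-v_j^2$, the inner products collapse to $\langle A_i,A_j\rangle = v_i^2 v_j^2$ for $i\neq j$, $\|B_{ij}\|^2 = v_i^2 v_j^2$, and all remaining cross products vanish, giving $\|\gamma''\|^2 = \sum_i \|\gamma_i''\|^2 + 6\sum_{i<j} v_i^2 v_j^2$. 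By the Cazzaniga--Lerario--Rosana formula \cite{CLR2023}, a unit-speed geodesic $\tilde\gamma_i$ in $\mathbb V_{n_i,d_i}$ has spherical curvature $\sqrt{2(d_i-1)/d_i}$, so $\|\tilde\gamma_i''\|^2 = (3d_i-2)/d_i$; rescaling to speed $v_i$ yields $\|\gamma_i''\|^2 = v_i^4(3d_i-2)/d_i$. Substituting and using $2\sum_{i<j} v_i^2 v_j^2 = 1 - \sum_i v_i^4$ produces the clean expression
$$\kappa^2 = \|\gamma''\|^2 - 1 = 2 - 2\sum_{i=1}^r \frac{v_i^4}{d_i}.$$

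Finally, with $u_i := v_i^2$, both claims reduce to extremizing the strictly convex function $f(u) = \sum_i u_i^2/d_i$ on the simplex $\sum u_i = 1$, $u_i \geq 0$. Lagrange multipliers place the interior minimum of $f$ at $u_i = d_i/d$, where $f = 1/d$ and $\kappa = \sqrt{2(d-1)/d}$, with $v_i = \sqrt{d_i/d}$; this is part (1). Convexity places the maximum of $f$ at a vertex, and the largest vertex value is $1/d_\ell$ with $d_\ell = \min_i d_i$, yielding the minimum curvature $\sqrt{2(d_\ell-1)/d_\ell}$, attained when only the $\ell$-th factor moves along a Veronese geodesic; this is part (2). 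I expect the main obstacle to be the bookkeeping in the inner-product expansion of $\|\gamma''\|^2$: in particular, the non-vanishing cross term $\langle A_i, A_j\rangle = v_i^2 v_j^2$ arising from the two factors $\langle \gamma_k,\gamma_k''\rangle = -v_k^2$ is easy to miss and is essential for obtaining the coefficient $6$ in front of $\sum_{i<j} v_i^2 v_j^2$.
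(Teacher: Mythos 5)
Your proof is correct, and it reaches the paper's key identity by a genuinely different computation. The paper (Section~\ref{ext_curv}) works with the explicit geodesic parametrization of Lemma~\ref{lem_geodesics}, expands $\gamma_i''(0)$ in the Bombieri--Weyl monomial basis, and projects onto the normal space via the decomposition $N_{\vect E}\Xnd = \mathcal P\oplus\mathcal W\oplus\mathcal G$ of Lemma~\ref{lem_TS}, obtaining $\Vert P_{\vect E}(\gamma''(0))\Vert^2 = 2\sum_i(\theta_i^2 - \theta_i^4/d_i)$, which under the constraint $\sum_i\theta_i^2=1$ is exactly your $\kappa^2 = 2 - 2\sum_i v_i^4/d_i$. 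You never touch the monomial basis or the normal-space decomposition: you compute the full ambient norm $\Vert\gamma''\Vert^2$ by bilinearity (your bookkeeping is right --- the cross terms $\langle A_i,A_j\rangle = v_i^2v_j^2$ survive, all others vanish), import the Veronese curvature $\sqrt{2(d_i-1)/d_i}$ from \cite{CLR2023} as a black box, and subtract the radial part via $\kappa^2 = \Vert\gamma''\Vert^2 - 1$, which is legitimate because a unit-speed geodesic of $\Xnd$ on the unit sphere satisfies $\langle\gamma'',\gamma\rangle=-1$ and has vanishing tangential acceleration. The optimization step also differs: the paper runs Lagrange multipliers and enumerates critical points indexed by subsets $I\subset\{1,\ldots,r\}$, then invokes monotonicity of $x\mapsto\sqrt{2(x-1)/x}$, whereas you use strict convexity of $f(u)=\sum_i u_i^2/d_i$ on the simplex, placing the minimum of $f$ (maximal curvature) at the interior critical point $u_i=d_i/d$ and its maximum (minimal curvature) at a vertex. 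Your route is cleaner for part (2), since the vertex argument certifies the global minimum immediately, and it generalizes to tensor products of arbitrary spherical submanifolds whose factor curvatures are known; what it forgoes is the explicit location of the curvature inside the normal space (the components $\vect W\in\mathcal W$, $\vect G\in\mathcal G$), which the paper's computation exhibits and which ties in with the block structure of the Weingarten map in Theorem~\ref{lm:secFundForm}.
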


Our third main result concerns the volume of the tubular neighborhood
$$U(\varepsilon) := \{\vect F\in\mathbb S(\V)\ \Bigm|\ d_{\mathbb S}(\vect F, \Xnd) < \varepsilon\}.$$
In Section \ref{sec_volume} we compute this volume in terms of complete matchings in a weighted graph. For a tuple $(v_1,\ldots,v_r)$ of nonnegative integers let $G=(V,E)$ be the complete graph on $v:=v_1+\cdots+v_r$ vertices. Recall that the tuple of degrees is $\vect d = (d_1,\ldots, d_r)$. We define weights on $E$ as follows: the vertices of $G$ are partitioned into $r$ groups $V= \mathcal I_1\sqcup\cdots\sqcup\mathcal I_r$ of cardinalities $\vert \mathcal I_k\vert = v_k$. The weight $w(e)$ of an edge $e$ between vertices in group $\mathcal I_k$ is $w(e)=\frac{d_k-1}{d_k}$, and the weight of an edge across groups is $1$. Given a perfect matching $C\subset E$ we define its weight to be $w(C):=\prod_{e\in C} w(e).$
This defines the function
\begin{equation}\label{def_D}
D_{\vect d}(v_1,\dots,v_r) :=  (-1)^\frac{v}{2} \sum_{C\subset E\; \text{\normalfont perfect matching}} w(C).
\end{equation}
We now have the following result.
\begin{theorem}[Volume of a tubular neighborhood]\label{main_thm_vol}
Let $n =  \dim \Xnd$, $N = \dim(\mathbb S(\V))$ and $c=N-n$. Define
$J_i(\varepsilon) = \int_{0}^{\varepsilon}  (\sin \phi)^{N-n+2i-1} \cdot (\cos\phi)^{n-2i}\; \mathrm d \phi$
and 
$$\theta_i = \frac{\Gamma(\frac{c}{2})}{2^i\,\Gamma(i+\frac{c}{2})} \;  \sum_{\substack{v_1,\ldots,v_r\in\mathbb N:\ v_i\leq n_i\\[0.2em] v_1+\cdots+v_r = 2i}} D_{\vect d}(v_1,\ldots,v_r).$$
Then for $\varepsilon < \tau(\Xnd)$, we have 
$$\mathrm{vol}(U(\varepsilon)) = \frac{\sqrt{(d_1\cdots d_r)^n}}{2^{r-1}} \cdot \mathrm{vol}(\mathbb S^{n_1}) \cdots \mathrm{vol}(\mathbb S^{n_r}) \cdot \mathrm{vol}(\mathbb S^{c-1})\cdot \sum_{0\leq 2i\leq n}  \theta_i\cdot J_i(\varepsilon).$$
\end{theorem}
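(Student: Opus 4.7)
The plan is to invoke Weyl's tube formula for a submanifold of the sphere, reduce the resulting integrated curvatures to a single local computation by homogeneity, and then unfold the spherical curvature integral combinatorially as a sum over perfect matchings. Concretely, for any compact $C^2$-submanifold $M\subset\mathbb{S}^N$ of dimension $n$ and codimension $c=N-n$ and any $\varepsilon<\tau(M)$, Weyl's formula expands
$$\mathrm{vol}(U(\varepsilon)) \;=\; \mathrm{vol}(\mathbb{S}^{c-1})\sum_{0\leq 2i\leq n} K_{2i}(M)\cdot J_i(\varepsilon),$$
where $K_{2i}(M) = \int_M k_{2i}(p)\,\mathrm d\mathrm{vol}_M(p)$ is an integrated Lipschitz--Killing curvature built from trace polynomials in the second fundamental form of $M$ in $\mathbb{S}^N$, and $J_i$ collects the radial and normal-sphere Jacobian factors of the tube. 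The product group $O(n_1+1)\times\cdots\times O(n_r+1)$ acts on $\V$ isometrically (by Kostlan's theorem applied factor by factor) and transitively on $\Xnd$, so $k_{2i}$ is constant along $\Xnd$ and $K_{2i}(\Xnd)=\mathrm{vol}(\Xnd)\cdot k_{2i}(\vect F_0)$ for any chosen base point.

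Next I would compute $\mathrm{vol}(\Xnd)$ separately via the Veronese-product parametrization $\mathbb{S}^{n_1}\times\cdots\times\mathbb{S}^{n_r}\to\Xnd$, $(\vect x_1,\ldots,\vect x_r)\mapsto \vect x_1^{d_1}\otimes\cdots\otimes \vect x_r^{d_r}$. This map is $2^{r-1}$-to-one (sign flips on an even number of factors preserve the tensor), and each Veronese map $\vect x\mapsto \vect x^{d_i}$ scales orthonormal tangent vectors by $\sqrt{d_i}$ in the Bombieri--Weyl metric, giving a combined Jacobian $\sqrt{(d_1\cdots d_r)^n}$ and thus exactly the volume prefactor appearing in the theorem.

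The heart of the argument is computing $a_i = k_{2i}(\vect F_0)$ at the basepoint $\vect F_0=\vect x_0^{d_1}\otimes\cdots\otimes \vect x_0^{d_r}$. The tangent space splits as $T_{\vect F_0}\Xnd=\bigoplus_k T_k$ along the $r$ Veronese factors, while the normal space splits into the radial line, block-diagonal directions lying in the normal bundle of a single $\mathbb V_{n_k,d_k}$, and cross-block directions of the form $\vect x_0^{d_1}\otimes\cdots\otimes u_k\otimes\cdots\otimes u_l\otimes\cdots$ with $u_k\in T_k$, $u_l\in T_l$, $k\neq l$. Direct differentiation of the tensor product yields $\mathrm{II}(u,v)$ in this decomposition, the single nonzero Veronese shape eigenvalue in each block being $\sqrt{(d_k-1)/d_k}$. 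Writing $k_{2i}$ as a spherical integral over the unit normal sphere of the appropriate trace polynomial in the shape operator $S_\nu$ and evaluating it by Wick's theorem produces the $\Gamma(c/2)/(2^i\Gamma(i+c/2))$ prefactor of $a_i$ and reduces the integral to a sum over pairings of $2i$ tangent-index slots. Orthogonality of the normal directions forces each surviving pair to be intra-block, identifying these pairings with perfect matchings on the graph $G$ whose edges lie within a single group $\mathcal I_k$; the squared Veronese eigenvalue together with the leftover normalization contributes exactly $d_k(d_k-1)$ per intra-block edge, and the alternating sign in the curvature polynomial yields the overall $(-1)^{m/2}$.

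The main obstacle will be the bookkeeping in this last step: verifying that cross-block second-fundamental-form contributions cancel after the antisymmetrization, that the sign $(-1)^{m/2}$ emerges correctly, and that all numerical constants combine to reproduce $D_{\vect d}(m_1,\ldots,m_r)$ exactly. Once the second fundamental form is written out explicitly in the product structure above, however, this reduces to a finite algebraic verification rather than a conceptual difficulty.
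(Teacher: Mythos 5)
Your overall strategy is the same as the paper's: Weyl's tube formula, reduction of the curvature coefficients to the single point $\vect E$ by the transitive isometric action of $O(n_1+1)\times\cdots\times O(n_r+1)$, the factorization $\mathrm{vol}(\Xnd)=2^{-(r-1)}\sqrt{(d_1\cdots d_r)^n}\,\mathrm{vol}(\mathbb S^{n_1})\cdots\mathrm{vol}(\mathbb S^{n_r})$ via the $2^{r-1}{:}1$ parametrization, the passage from the uniform normal sphere to a Gaussian normal vector (producing the $\Gamma(\tfrac c2)/(2^i\Gamma(i+\tfrac c2))$ factor via $\chi^2_c$ moments), and a Wick-type pairing argument for the expected principal minors. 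Up to that point your outline matches Lemmas \ref{lemma_volume_Xnd} and \ref{lemma_kappa2i} and the setup of Proposition \ref{thm_graphs}.

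However, the final combinatorial step contains a genuine error. You claim that ``orthogonality of the normal directions forces each surviving pair to be intra-block,'' so that the pairings become perfect matchings using only edges inside a single group $\mathcal I_k$. This is false, and it is exactly the opposite of what makes the formula work. What orthogonality (equivalently, the independence of the entries of $L_{\vect F}$ in Corollary \ref{cor_LF}) actually forces is that each Wick contraction pairs an entry $\ell_{a,b}$ with its symmetric partner $\ell_{b,a}$, i.e.\ the surviving permutations are fixed-point-free involutions; the paired indices $a,b$ may perfectly well lie in \emph{different} groups. The cross-block entries of the Weingarten map are not negligible: by Theorem \ref{lm:secFundForm} the off-diagonal blocks $L_{i,j}$ are built from the $\mathcal G_{i,j}$-component of the normal vector (the cross terms of type $x_0^{d_1}\otimes\cdots\otimes v_k^{(i)}\otimes\cdots\otimes v_\ell^{(j)}\otimes\cdots\otimes x_0^{d_r}$), and for Gaussian $\vect F$ these are i.i.d.\ standard normal, contributing variance $1$ --- which is precisely why the graph in the definition of $D_{\vect d}$ is the \emph{complete} graph with cross-group edges of weight $1$, not a disjoint union of intra-group cliques. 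Your restricted count would make $D_{\vect d}(m_1,\dots,m_r)$ vanish whenever some $m_k$ is odd, and would make it vanish identically for the Segre case $\vect d=(1,\dots,1)$, since then every intra-group weight $d_k(d_k-1)$ is zero; but Example \ref{example_segre_L} shows $D_{\vect 1}(2,2,1,1)=-10\neq 0$, with the entire contribution coming from cross-group matchings. So the reduction to matchings must keep both edge types, with per-edge weights given by the variances ($d_k(d_k-1)$ within $\mathcal I_k$, and $1$ across groups), as in Proposition \ref{thm_graphs}; as written, your argument computes the wrong quantity.
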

The proof of this theorem is based on  computing the Weingarten map of $\mathbb X_{\vect n,\vect d}$, which we do in Theorem \ref{lm:secFundForm}. We show that the Weingarten map of $\mathbb X_{\vect n,\vect d}$ admits a block structure where the diagonal blocks are the Weingarten maps of the Veronese factors.

At the end of Section \ref{sec:matchings} we compute the coefficients $\theta_i$ from Theorem \ref{main_thm_vol} for the Segre manifold $\Xnd$ where $\mathbf n = \mathbf 1_r:= (1,1,\ldots,1)$ and $\mathbf d = \mathbf 1_r$. This Segre manifold is the image of $\mathbb S^1\times \cdots\times \mathbb S^1$ under the Segre embedding.

\subsection*{Acknowledgements}
We thank Antonio Lerario and Andrea Rosana for helping in finding several references related to differential geometry and for carefully explaining their paper \cite{CLR2023} to us. We also thank Jan Draisma for a discussion which led to~Theorem \ref{main_thm_vol} and an anonymous referee for helping us to improve the paper.

\subsection*{Organization of the paper}
In Section \ref{sec_diff_geo} we discuss the differential geometry and curvature of manifolds defined by tensor products of vectors. We then apply the results from Section \ref{sec_diff_geo} in Section \ref{sec:SFF_SV} to study the curvature of the (spherical) Segre--Veronese manifold~$\mathbb X_{\vect n,\vect d}$. In particular, we work out the Weingarten map of $\mathbb X_{\vect n,\vect d}$. In Section \ref{sec:reach} we compute the reach and prove Theorems \ref{main:thm} and \ref{main_thm_curv}. Finally, in Section \ref{sec_volume}, we compute the volume of the tubular neighborhood and prove Theorem \ref{main_thm_vol}.

%%%%%%%%%%%%%%%
%%%%%%%%%%%%%%%
\section{Tensor products of Riemannian manifolds}\label{sec_diff_geo}

The tensor space $\mathbb R^{m_1+1}\otimes \cdots\otimes \mathbb R^{m_r+1}$ is a Euclidean space for the inner product defined by $\langle \vect x_1\otimes \cdots\otimes \vect x_r ,\, \vect y_1\otimes \cdots\otimes \vect y_r\rangle = \langle \vect x_1,\vect y_1\rangle \cdots  \langle \vect x_r,\vect y_r\rangle$, where $\langle \vect x,\vect y\rangle = \vect x^T\vect y$. Write $N:=(m_1+1)\cdots (m_r+1)-1$; then $\mathbb S^N$ is the sphere in $\mathbb R^{m_1+1}\otimes \cdots\otimes \mathbb R^{m_r+1}$.
We consider for $1\leq i\leq r$ a smooth embedded submanifold  $\mathbb M_i$ of the sphere $\mathbb S^{m_i}\subset \mathbb R^{m_i+1}$. We define the \emph{tensor product} of these manifolds to be 
$$\mathbb M_1\otimes \cdots \otimes \mathbb M_r:= \left\{\vect x_1 \otimes \cdots \otimes \vect x_r \mid \vect x_1\in \mathbb M_1,\ldots, \vect x_r\in \mathbb M_r\right\}.$$
\begin{proposition}\label{prop_helpful}
For $1\leq i\leq r$ let $\mathbb M_i$ be a smooth Riemannian submanifold of~$\mathbb S^{m_i}$ of dimension~$n_i$, and denote 
$\mathbb M:= \mathbb M_1\otimes \cdots \otimes \mathbb M_r.$
Furthermore, denote the tensor product map by $\psi: \mathbb M_1\times \cdots \times \mathbb M_r\to \mathbb M, (\vect x_1,\ldots, \vect x_r)\mapsto \vect x_1 \otimes \cdots\otimes  \vect x_r$.
Then:
\begin{enumerate}
\item $\mathbb M$ is a Riemannian submanifold of $\mathbb S^N$ of dimension $n_1+\cdots+n_r$.
\item The tangent space of $\mathbb M$ at $\vect x=\vect x_1 \otimes \cdots \otimes \vect x_r$ is
$$T_{\vect x} \mathbb M = T_{\vect x_1}\mathbb M_1\otimes \vect x_2 \otimes \cdots \otimes \vect x_r + \cdots + \vect x_1\otimes \vect x_2 \otimes \cdots \otimes T_{\vect x_r}\mathbb M_r.$$
\item $\psi$ is a local isometry.
\end{enumerate}
\end{proposition}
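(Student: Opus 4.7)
The plan is to derive (1), (2), and (3) simultaneously by analyzing the tensor product map $\psi$. Since $\mathbb M_i \subset \mathbb S^{m_i}$, we have $\|\psi(\vect x_1,\dots,\vect x_r)\| = \prod_i \|\vect x_i\| = 1$, so $\psi$ does land in $\mathbb S^N$. The heart of the argument is the computation of $d\psi$, together with an orthogonality that follows from tangency to the sphere.

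By the Leibniz rule for tensor products, for $\vect v_i \in T_{\vect x_i}\mathbb M_i$,
\[
d\psi_{(\vect x_1,\ldots,\vect x_r)}(\vect v_1,\ldots,\vect v_r) \;=\; \sum_{i=1}^r \vect x_1 \otimes \cdots \otimes \vect v_i \otimes \cdots \otimes \vect x_r.
\]
Because $\mathbb M_i \subset \mathbb S^{m_i}$ forces $\vect v_i \perp \vect x_i$, the $i$-th and $j$-th summands for $i \neq j$ are orthogonal in the tensor product inner product (their pairing contains the factor $\langle \vect v_i, \vect x_i\rangle = 0$). Combined with $\|\vect x_k\| = 1$, this yields $\|d\psi(\vect v_1,\dots,\vect v_r)\|^2 = \sum_i \|\vect v_i\|^2$, which is precisely the norm induced by the product Riemannian metric on $\mathbb M_1 \times \cdots \times \mathbb M_r$. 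Hence $d\psi$ preserves norms, is in particular injective, and $\psi$ is an isometric immersion; by the inverse function theorem it is therefore a local diffeomorphism and, since tangent lengths are preserved, a local isometry. This proves (3).

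To deduce (1) and (2), I would promote $\mathbb M$ from an immersed to an embedded smooth submanifold by analyzing the fibers of $\psi$. The rigidity of rank-one tensor decomposition (up to scalars) forces $\psi(\vect y_1,\dots,\vect y_r) = \psi(\vect x_1,\dots,\vect x_r)$ iff $\vect y_i = \lambda_i \vect x_i$ for scalars with $\prod_i \lambda_i = 1$, and $\|\vect y_i\| = \|\vect x_i\| = 1$ restricts $\lambda_i \in \{\pm 1\}$. Hence $\psi$ is at most $2^{r-1}$-to-one and the corresponding finite group acts freely on the domain; the quotient is a smooth manifold onto which $\psi$ descends as a diffeomorphism with $\mathbb M$. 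This makes $\mathbb M$ an embedded smooth submanifold of $\mathbb S^N$ of dimension $n_1 + \cdots + n_r$, and its tangent space at $\vect x$ is the image of $d\psi$, i.e.\ the sum in (2); the orthogonality computed above shows the sum is in fact direct.

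The main obstacle I anticipate is the global upgrade from ``immersed'' to ``embedded,'' which requires showing the fiber-gluing is regular enough to yield a genuine submanifold rather than only an immersed one; in the Segre--Veronese application each $\mathbb V_{n_i,d_i}$ is compact, so $\psi$ is automatically proper and this subtlety disappears. By contrast, the orthogonality trick coming from $\vect v_i \perp \vect x_i$ does all the local work behind (2) and (3) essentially for free.
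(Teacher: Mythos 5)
Your treatment of (2) and (3) coincides with the paper's: compute $\mathrm D\psi$ summand by summand, and use $\vect v_i\perp\vect x_i$ (tangency to $\mathbb S^{m_i}$) to kill the cross terms, so that $\mathrm D\psi$ preserves inner products. The only blemish there is the appeal to the inverse function theorem: $\psi$ maps into the higher-dimensional sphere $\mathbb S^N$, so that theorem does not apply as stated; what you want is the standard fact that injectivity of the differential makes $\psi$ an immersion, hence locally an embedding onto its image.

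The genuine gap is in your route to (1). The group $\Lambda=\{(\lambda_1,\dots,\lambda_r)\in\{\pm 1\}^r \mid \lambda_1\cdots\lambda_r=1\}$ acts on $\mathbb S^{m_1}\times\cdots\times\mathbb S^{m_r}$, but it preserves $\mathbb M_1\times\cdots\times\mathbb M_r$ only when every factor satisfies $-\mathbb M_i=\mathbb M_i$, and no such antipodal symmetry is among the hypotheses of the proposition. Your fiber computation shows only that each fiber of $\psi$ is \emph{contained in} a $\Lambda$-orbit; without the symmetry, fibers need not be full orbits, their cardinality can jump from point to point, and the quotient manifold on which your argument rests simply does not exist. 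This is not a removable technicality: take $r=2$, let $\mathbb M_1\subset\mathbb S^2$ be the union of two short arcs, one through the north pole $\vect p$ with tangent direction $\vect e_1$ and one through $-\vect p$ with tangent direction $\vect e_2$, and let $\mathbb M_2=\mathbb S^1$. Then near $\vect p\otimes\vect x_2$ the set $\mathbb M_1\otimes\mathbb M_2$ is a union of two sheets with different tangent planes, so no quotient-and-descend argument can exhibit it as a submanifold there; either a restriction to antipode-free neighborhoods or a symmetry hypothesis on the factors is genuinely needed. The paper takes the local route: it chooses atlases of the $\mathbb M_i$ whose charts contain no antipodal pairs, so that $\psi$ is injective on every product of charts, and uses these products as local parametrizations of $\mathbb M$ --- no group action required. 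Your quotient argument is correct, and arguably cleaner, in exactly the situation the paper applies the proposition to, namely $\Xnd$, where each $\mathbb V_{n_i,d_i}$ is compact and closed under negation; there it exhibits $\psi$ as a $2^{r-1}{:}1$ covering, which is precisely the fact the paper invokes later in the proof of Lemma \ref{lemma_volume_Xnd}. But as a proof of the proposition as stated, the step ``the corresponding finite group acts freely on the domain'' fails.
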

\begin{proof}
For $1\leq i\leq r$ let $\mathcal A_i = ( U_{i,j},\varphi_{i,j})_j$ be an atlas for $\mathbb M_i$ such that $\vect u\in U_{i,j}$ implies that the antipodal point $-\vect u\not\in U_{i,j}$. Such an atlas exists since  $\vect 0\not\in \mathbb M_i$. Define the open sets $ U_{j_1,\ldots,j_r} := \psi(U_{1,j_1}\times \cdots \times U_{r,j_r})$; then $\psi|_{U_{1,j_1}\times \cdots \times U_{r,j_r}}$ is an isomorphism, so we have an atlas for $\mathbb M$  with charts $U_{j_1,\ldots,j_r}$
and maps $(\varphi_{1,j_1}\times \ldots\times \varphi_{r,j_r}) \circ (\psi|_{U_{j_1}\times \cdots \times U_{j_r}})^{-1}$. This also shows that we have $\dim \mathbb M= \dim (\mathbb M_1\times\cdots\times \mathbb M_r)=n_1+\cdots +n_r$. The Riemannian structure on the ambient space~$\mathbb S^N$ induces a Riemannian structure on $\mathbb M$.

For the second statement, we use that $T_{(\vect x_1,\ldots,\vect x_r)} (\mathbb M_1 \times \dots \times \mathbb M_r) = T_{\vect x_1} \mathbb M_1\times \cdots\times T_{\vect x_r} \mathbb M_r.$ For $1\leq i\leq r$ let $\vect v\in T_{\vect x_i}\mathbb M_i$.
By multilinearity, the derivative of $\psi$ at $(\vect x_1,\ldots,\vect x_r)$ maps
$$\mathrm D_{(\vect x_1,\ldots,\vect x_r)}\psi (0,\ldots,0,\vect v,0,\ldots, 0) = \vect x_1\otimes\cdots\otimes \vect x_{i-1}\otimes \vect v\otimes \vect x_{i+1}\otimes\cdots\otimes \vect x_r.$$
This proves the second statement, since $T_{\vect x}\mathbb M$ is the image of $\mathrm D_{(\vect x_1,\ldots,\vect x_r)}\psi$.

Finally, for $\vect v\in T_{\vect x_i}\mathbb M_i$ and $\vect w\in T_{\vect x_j}\mathbb M_j$ we have
$$\langle \vect x_1\otimes\cdots\otimes \vect v\otimes\cdots\otimes \vect x_r,\; \vect x_1\otimes\cdots \otimes \vect w\otimes\cdots\otimes \vect x_r\rangle = \begin{cases} \langle \vect v,\vect w\rangle, &i=j \\ \langle \vect v,\vect x_i\rangle\, \langle \vect w,\vect x_j\rangle, & i\neq j\end{cases} .$$
Since $\langle \vect v,\vect x_i\rangle = \langle \vect w,\vect x_j\rangle=0$, this shows that the inner product between the images of $(0,\ldots,0,\vect v,0,\ldots, 0)$ and $(0,\ldots,0,\vect w,0,\ldots, 0)$ under $\mathrm D_{(\vect x_1,\ldots,\vect x_d)}$ is $\langle \vect v,\vect w\rangle$, if $i=j$, and~$0$ otherwise. This shows that the derivative $\mathrm D_{(\vect x_1,\ldots,\vect x_r)} \psi$ preserves inner products on a basis of $T_{(\vect x_1,\ldots,\vect x_r)} (\mathbb M_1 \times \dots \times \mathbb M_r)$ and hence is an orthogonal map. This proves the third statement. 
\end{proof}

Using the notation of Proposition \ref{prop_helpful} we can now write
\begin{equation}\label{def_segre_veronese2}
\Xnd = \begin{cases}
     \mathbb V_{n_1,d_1}\otimes \cdots \otimes \mathbb V_{n_r,d_r}, &\text{if one is $d_i$ odd;} \\[0.3em]
    (\mathbb V_{n_1,d_1}\otimes \cdots \otimes \mathbb V_{n_r,d_r})\ \cup\ -(\mathbb V_{n_1,d_1}\otimes \cdots \otimes \mathbb V_{n_r,d_r}), &\text{if all $d_i$ are even.} 
\end{cases}
\end{equation}
Furthermore, Proposition \ref{prop_helpful} implies that $\Xnd$ is a smooth submanifold of the sphere of dimension $\dim \Xnd = n_1+\cdots+n_r$.
Therefore, we will henceforth call it the \emph{(spherical) Segre--Veronese manifold}.

\subsection{The second fundamental form of a tensor product manifold}\label{sec:sff_tensor}
Recall that the second fundamental form $\mathrm{II}_{\vect x}$ of a Riemannian submanifold $\mathbb M\subset \mathbb S^N$ at a point $\vect x\in \mathbb M$ is the trilinear form $$\mathrm{II}_{\vect x}: T_{\vect x} \mathbb M\times T_{\vect x} \mathbb M\times N_{\vect x} \mathbb M\to\mathbb R,\quad (\vect v, \vect w, \vect a)\mapsto \left\langle \frac{\partial \vect v(\vect u)}{\partial \vect w}\Bigm|_{\vect u = \vect x},\ \vect a \right\rangle,$$
where $\vect v(\vect u)$ is a (local) smooth tangent field of $\mathbb M$ with $\vect v(\vect x)= \vect v$. 
For a fixed $\vect a\in N_{\vect x}\mathbb M$ the \emph{Weingarten map} is the linear map
$$L_{\vect a}: T_{\vect x} \mathbb M\to T_{\vect x}  \mathbb M,$$
such that $\mathrm{II}_{\vect x} (\vect v,\vect w,\vect a) = \langle \vect v, L_{\vect a}(\vect w)\rangle.$ 

The next proposition provides the Weingarten map for a tensor product of manifolds.

\begin{proposition}\label{prop_helpful2}
Let $\mathbb M_1,\ldots, \mathbb M_r$ be as in Proposition \ref{prop_helpful} and $\mathbb M = \mathbb M_1\otimes\cdots \otimes \mathbb M_r$. Consider a point $\vect x = \vect x_1\otimes \cdots \otimes \vect x_r\in \mathbb M$ and a normal vector $\vect a\in N_{\vect x}\mathbb M$. A matrix representation of the Weingarten map of $\mathbb M$ at $\vect x$ in direction $\vect a$ relative to orthonormal coordinates is
$$L_{\vect a} = \begin{bmatrix}
    L_{1} & L_{1,2} & \cdots & L_{1,r}\\
    (L_{1,2})^T & L_{2} & \cdots & L_{1,r-1}\\
     & & \ddots & \\
     (L_{1,r})^T & (L_{1,r-1})^T & \cdots & L_{r}
    \end{bmatrix}
    ,$$
where the matrices $L_{i,j}$ and $L_i$ are defined as follows: let $\vect v_1^{(i)},\ldots,\vect v_{n_i}^{(i)}$ be an orthonormal basis for the tangent space $T_{\vect  x_{i}} \mathbb M_i$.
\begin{enumerate}
\item The off-diagonal blocks are
$$L_{i,j} := \begin{bmatrix} \langle \vect  x_{1} \otimes \cdots \otimes  \vect v_k^{(i)}\otimes \cdots\otimes\vect v_\ell^{(j)} \otimes  \cdots \otimes \vect  x_{r}, \vect a\rangle \end{bmatrix}_{1\leq k\leq n_i, 1\leq \ell\leq n_j} \in\mathbb R^{n_i\times n_j}.$$
\item Write $R_i := \vect x_1\otimes \cdots \otimes \vect x_{i-1}\otimes N_{\vect x_i} \mathbb M_i \otimes \vect  x_{i+1}\otimes \cdots \otimes  \vect x_{r},$
and let the orthogonal projection of $\vect a$ onto $R_i$ be $\vect x_1\otimes \cdots \otimes \vect x_{i-1}\otimes\vect a_i \otimes \vect  x_{i+1}\otimes \cdots \otimes  \vect x_{r}$. 
Then $\vect a_i\in N_{\vect x_i}\mathbb M_i$, and $L_i\in\mathbb R^{n_i\times n_i}$ is a matrix representation of the Weingarten map $L_{\vect a_i}$ of $\mathbb M_i$ at $\vect x_i$ in direction $\vect a_i$ with respect to the orthonormal basis $\vect v_1^{(i)},\ldots,\vect v_{n_i}^{(i)}$.
\end{enumerate}
\end{proposition}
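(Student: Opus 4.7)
The plan is to compute the second fundamental form $\mathrm{II}_{\vect x}(\vect v,\vect w,\vect a)$ directly on an orthonormal basis coming from the block decomposition in Proposition \ref{prop_helpful}(2). By part (3) of that proposition, the basis
\[
\vect x_1\otimes\cdots\otimes\vect v_k^{(i)}\otimes\cdots\otimes\vect x_r,\qquad 1\le i\le r,\ 1\le k\le n_i,
\]
is orthonormal for $T_{\vect x}\mathbb M$; the entries of $L_{\vect a}$ in the stated block form are exactly the pairings $\mathrm{II}_{\vect x}$ of such basis vectors with $\vect a$. So the whole proposition reduces to two computations, one for a pair of basis vectors from different blocks and one for a pair from the same block.

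For the off-diagonal case I take $\vect v$ in the $i$-th block and $\vect w$ in the $j$-th block with $i\ne j$. I extend $\vect v$ locally by taking a smooth extension $\vect v_k^{(i)}(\vect y_i)$ of $\vect v_k^{(i)}$ as a tangent field on $\mathbb M_i$ and setting $\vect v(\vect y_1\otimes\cdots\otimes\vect y_r):=\vect y_1\otimes\cdots\otimes\vect v_k^{(i)}(\vect y_i)\otimes\cdots\otimes\vect y_r$, which is well defined in a neighborhood of $\vect x$ where the tensor decomposition is unique up to a manageable scaling. Differentiating along the curve that moves only the $j$-th factor and using multilinearity yields
\[
\frac{\partial \vect v}{\partial \vect w}\Big|_{\vect x}=\vect x_1\otimes\cdots\otimes\vect v_k^{(i)}\otimes\cdots\otimes\vect v_\ell^{(j)}\otimes\cdots\otimes\vect x_r,
\]
and pairing this with $\vect a$ gives exactly the stated $(k,\ell)$-entry of $L_{i,j}$.

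The diagonal case is the interesting one. With the same type of extension, differentiating $\vect v(\vect u)$ along $\vect w=\vect x_1\otimes\cdots\otimes\vect v_\ell^{(i)}\otimes\cdots\otimes\vect x_r$ now hits only the $i$-th slot and produces
\[
\vect x_1\otimes\cdots\otimes \tfrac{\partial \vect v_k^{(i)}}{\partial \vect v_\ell^{(i)}}\big|_{\vect x_i}\otimes\cdots\otimes\vect x_r.
\]
Taking inner product with $\vect a$ reduces to pairing $\partial \vect v_k^{(i)}/\partial \vect v_\ell^{(i)}$ with the unique $\vect b_i\in\mathbb R^{m_i+1}$ such that $\vect x_1\otimes\cdots\otimes\vect b_i\otimes\cdots\otimes\vect x_r$ is the orthogonal projection of $\vect a$ onto the slice $\vect x_1\otimes\cdots\otimes\mathbb R^{m_i+1}\otimes\cdots\otimes\vect x_r$. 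The key step, and the one that needs the most care, is to identify $\vect b_i$ with $\vect a_i$. Here I use that $\vect a$ is normal to $\mathbb M$ at $\vect x$: orthogonality of $\vect a$ with $\vect x=\vect x_1\otimes\cdots\otimes\vect x_r$ forces $\langle \vect b_i,\vect x_i\rangle=0$, and orthogonality with each tangent vector $\vect x_1\otimes\cdots\otimes\vect v_\ell^{(i)}\otimes\cdots\otimes\vect x_r$ forces $\vect b_i\perp T_{\vect x_i}\mathbb M_i$. Using $\mathbb R^{m_i+1}=\mathbb R\vect x_i\oplus T_{\vect x_i}\mathbb M_i\oplus N_{\vect x_i}\mathbb M_i$ this leaves $\vect b_i\in N_{\vect x_i}\mathbb M_i$, and by definition $\vect b_i=\vect a_i$. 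The pairing is then exactly $\mathrm{II}_{\vect x_i}^{\mathbb M_i}(\vect v_k^{(i)},\vect v_\ell^{(i)},\vect a_i)$, i.e.\ the $(k,\ell)$-entry of the Weingarten map of $\mathbb M_i$ at $\vect x_i$ in direction $\vect a_i$, as claimed.

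The main obstacle I anticipate is purely bookkeeping: being careful that the local tangent field extension on the product is genuinely tangent to $\mathbb M$, and that the projection $\vect b_i$ really lands in $N_{\vect x_i}\mathbb M_i$. Both are handled by the orthogonality of $\vect a$ with the basis vectors of $T_{\vect x}\mathbb M$ coming from Proposition \ref{prop_helpful}, so there is no serious analytic difficulty, only the need to keep the tensor slots straight.
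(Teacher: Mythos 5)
Your proof is correct and follows essentially the same route as the paper's: the same orthonormal basis of $T_{\vect x}\mathbb M$ from Proposition \ref{prop_helpful}, the same one-slot tangent-field extension $\vect v(\vect u_1\otimes\cdots\otimes\vect u_r)=\vect u_1\otimes\cdots\otimes\vect v_k^{(i)}(\vect u_i)\otimes\cdots\otimes\vect u_r$, and the same multilinearity computation separating the off-diagonal from the diagonal blocks. If anything, your handling of the diagonal case is slightly more careful than the paper's, which asserts outright that $\vect x_1\otimes\cdots\otimes\tfrac{\partial\vect v_k^{(i)}}{\partial\vect v_\ell^{(i)}}\otimes\cdots\otimes\vect x_r$ lies in $R_i$ (true only modulo components along $\vect x$ and $T_{\vect x}\mathbb M$, which pair to zero with $\vect a$); your identification of the full-slice projection $\vect b_i$ with $\vect a_i$ via the normality of $\vect a$ supplies exactly that missing justification.
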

\begin{proof}
By Proposition \ref{prop_helpful}, $\vect  x_{1} \otimes \cdots \otimes  \vect v_k^{(i)}\otimes\cdots \otimes \vect  x_{r}$ for $1\leq i\leq r$ and $1\leq k\leq n_i$ is an orthonormal basis of $T_{\vect x}\mathbb M$. 
Fix tangent vectors $\vect v = \vect  x_{1} \otimes \cdots \otimes  \vect v_k^{(i)}\otimes\cdots \otimes \vect  x_{r}$ 
and
$\vect w:=\vect  x_{1} \otimes \cdots \otimes  \vect v_\ell^{(j)}\otimes\cdots \otimes \vect  x_{r}$. Furthermore, let $\vect v_k^{(i)}(\bf u_i)$ be a local smooth tangent field of $\mathbb M_i$ with $\vect v_k^{(i)}(\vect x_i) = \vect v_k^{(i)}$. Then we obtain a local smooth tangent field of $\mathbb M$ with $\vect v(\vect x) = \vect v$ by setting
$$\vect v(\vect u_1\otimes\cdots\otimes \vect u_r) := \vect  u_{1} \otimes \cdots \otimes  \vect v_k^{(i)}(\vect u_i)\otimes\cdots \otimes \vect  u_{r}.$$
By multilinearity,
$$\frac{\partial \vect v(\vect u)}{\partial \vect w} =
\begin{cases}
\vect  x_{1} \otimes \cdots \otimes  \frac{\partial \vect v_k^{(i)}(\vect u_i)}{\partial \vect v_\ell^{(i)}} \otimes \cdots \otimes \vect  x_{r}, &\text{ if } i= j;\\[1em]
\vect  x_{1} \otimes \cdots \otimes  \vect v_k^{(i)}\otimes \cdots\otimes\vect v_\ell^{(j)} \otimes  \cdots \otimes \vect  x_{r}, &\text{ if } i\neq j.
\end{cases}
$$
This shows that the off-diagonal blocks of $L_{\vect a}$ are the matrices $L_{i,j}$.

For the diagonal blocks $(i=j)$ we observe that $\vect x_{1} \otimes \cdots \otimes  \tfrac{\partial \vect v_k^{(i)}(\vect u_i)}{\partial \vect v_\ell^{(i)}} \otimes \cdots \otimes \vect  x_{r}\in R_i$, so  
\begin{align*}
\langle \vect v, L_{\vect a}(\vect w)\rangle = \mathrm{II}_{\vect x}(\vect v,\vect w, \vect a) &= \left\langle \vect x_{1} \otimes \cdots \otimes  \tfrac{\partial \vect v_k^{(i)}(\vect u_i)}{\partial \vect v_\ell^{(i)}} \otimes \cdots \otimes \vect  x_{r}, \vect a\right\rangle \\
&= \left\langle  \tfrac{\partial \vect v_k^{(i)}(\vect u_i)}{\partial \vect v_\ell^{(i)}},\vect a_i\right\rangle= \langle \vect v_\ell^{(i)}, L_{\vect a_i}(\vect v_k^{(i)})\rangle.\end{align*}
This settles the case $i=j$.
\end{proof}

%%%%%%%%%%%%%%%
%%%%%%%%%%%%%%%
\section{Geodesics and the second fundamental form of Segre--Veronese manifolds}\label{sec:SFF_SV}
We now use the results from the previous section to compute the second fundamental form and the Weingarten map for a (spherical) Segre--Veronese manifold $\mathbb X_{\vect n,\vect d}.$ The first step towards this goal is considering the Veronese manifold ($r=1$).

\subsection{Veronese manifolds}
The Bombieri-Weyl inner product on the space of homogeneous polynomials $H_{n,d}$ has the property that 
\begin{equation}\label{RKH}\langle \vect f,\ \boldsymbol \ell^d\rangle = \vect f(\ell_0,\ldots,\ell_n),  \text{ where } \boldsymbol\ell(\vect x) = \ell_0x_0+\cdots + \ell_nx_n;\end{equation}
that is, taking the inner product of $\vect f\in H_{n,d}$ with $\boldsymbol \ell^d\in\mathbb V_{n,d}$ evaluates $\vect f$ at the coeffcient vector of $\ell$. One calls $(\vect x, \vect y)\mapsto \langle \vect x,\vect y\rangle^d$ a \emph{reproducing kernel} for $H_{n,d}$.

Recall that the scaled monomials $\vect m_\alpha = \sqrt{\tbinom{d}{\alpha}}\,\vect x^{\alpha}$ form an orthonormal basis for the space of polynomials $H_{n,d}$. We first prove a lemma on the structure of the tangent space of the Veronese manifold.
\begin{lemma}\label{lemma_TS_veronese}
Consider $\vect m_{(d,0,\ldots,0)} = x_0^d\in \Vnd$. Then an orthonormal basis for the tangent space  $T_{x_0^d} \Vnd$ is $\{\vect m_{(d-1,1,0,\ldots,0)},\ldots,\vect m_{(d-1,0,\ldots,0,1)}\} = \{\sqrt{d}\,x_0^{d-1}x_k\}_{k=1}^n$. 
\end{lemma}
\begin{proof}
It follows from \cite[Theorem 18]{CLR2023} that $T_{x_0^d} \Vnd$ is spanned by $\sqrt{d}\,x_0^{d-1}x_k$, $1\leq k\leq n$. The fact that these monomials are orthonormal follows directly from the definition of the Bombieri-Weyl inner product.
\end{proof}
We denote the two linear spaces from \cite[Equation (30)]{CLR2023}:
\begin{align}\label{def_W_P}
P&:=\operatorname{span} \{\vect m_{\alpha} \mid \alpha_0 < d-2\} \quad\text{and}\quad 
W:=\operatorname{span} \{\vect m_{\alpha} \mid \alpha_0 = d-2\}.
\end{align} 
The spaces $P$ and $W$ are orthogonal to each other. Lemma~\ref{lemma_TS_veronese} implies the following.
\begin{lemma}\label{lemma_NS_veronese}
$N_{x_0^d}\Vnd = P\oplus W$. 
\end{lemma}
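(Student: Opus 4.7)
The plan is to decompose $H_{n,d}$ using the orthonormal monomial basis according to the value of $\alpha_0$, identify which pieces correspond to the position $x_0^d$ and to $T_{x_0^d}\Vnd$, and then read off the normal space as what remains inside the tangent space to the ambient sphere.

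More concretely, first I would note that the multi-indices $\alpha$ with $|\alpha|=d$ split into four disjoint classes according as $\alpha_0=d$, $\alpha_0=d-1$, $\alpha_0=d-2$, or $\alpha_0<d-2$. Since $\{\vect m_\alpha\}$ is an orthonormal basis of $H_{n,d}$ with respect to the Bombieri--Weyl inner product, this yields an orthogonal direct sum decomposition
\begin{equation*}
H_{n,d} = \operatorname{span}\{x_0^d\} \;\oplus\; \operatorname{span}\{\vect m_\alpha \mid \alpha_0 = d-1\} \;\oplus\; W \;\oplus\; P.
\end{equation*}
The first summand is the line spanned by the base point itself, and by Lemma~\ref{lemma_TS_veronese} the second summand is precisely $T_{x_0^d}\Vnd$.

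Next I would use the definition of the normal space of a submanifold of a sphere: $N_{x_0^d}\Vnd$ is the orthogonal complement of $T_{x_0^d}\Vnd$ inside $T_{x_0^d}\mathbb S(H_{n,d}) = (x_0^d)^\perp$. Combining this with the orthogonal decomposition above, the orthogonal complement of $T_{x_0^d}\Vnd$ in $H_{n,d}$ is $\operatorname{span}\{x_0^d\}\oplus W\oplus P$, and intersecting with $(x_0^d)^\perp$ removes the first summand, leaving $W\oplus P$.

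There is essentially no obstacle here: the only subtlety is keeping careful track of the four classes of multi-indices and of the distinction between the normal space inside the ambient Euclidean space and the normal space inside the sphere. Once the orthonormal monomial basis is split as above, the statement $N_{x_0^d}\Vnd = P\oplus W$ follows immediately, and the fact that $P$ and $W$ are themselves orthogonal (also noted in the excerpt just before the lemma) confirms that the sum is indeed a direct orthogonal sum.
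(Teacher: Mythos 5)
Your proof is correct and follows exactly the route the paper intends: the paper gives no written proof, stating only that Lemma~\ref{lemma_TS_veronese} implies the claim, and your argument---splitting the orthonormal monomial basis by the value of $\alpha_0$, identifying the $\alpha_0=d$ and $\alpha_0=d-1$ pieces with $\operatorname{span}\{x_0^d\}$ and $T_{x_0^d}\Vnd$, and taking the orthogonal complement inside $(x_0^d)^\perp$---is precisely the elaboration of that assertion.
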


The next theorem follows from Equations (28) and (29) in \cite{CLR2023}.
\begin{theorem}\label{weingarten_veronese}
Let $\vect f\in N_{x_0^d} \Vnd = P\oplus W$ and $L_{\vect f}$ be the Weingarten map of $\Vnd$ at $x_0^d$ and $\vect f$.
\begin{enumerate}
\item If $\vect f\in P$, then $L_{\vect f} = 0$.
\item If $\vect f \in W$, then $L_{\vect f}$ can be represented in orthonormal coordinates by the matrix
$$L_{\vect f}  = \sqrt{\frac{d-1}{d}}\; \begin{bmatrix}
\sqrt{2}\cdot f_{1,1} & f_{2,1} & \cdots & f_{n,1}\\[0.2em]
f_{2,1} & \sqrt{2}\cdot f_{2,2}& \cdots & f_{n,2}\\[0.2em]
\vdots & \vdots & \ddots & \vdots\\[0.2em]
f_{n,1} & f_{n,2} & \cdots & \sqrt{2}\cdot f_{n,n}
\end{bmatrix},$$
where 
$$\vect f = \sum_{1\leq i<j\leq n} f_{i,j}  \sqrt{d(d-1)}\, x_0^{d-2}x_ix_j +  \sum_{1\leq i\leq n} f_{i,i} \sqrt{\frac{d(d-1)}{2}}\, x_0^{d-2}x_i^2.$$
\end{enumerate}
\end{theorem}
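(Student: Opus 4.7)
The plan is to compute the Weingarten map directly from the definition by using explicit curves on $\Vnd$ through $x_0^d$ and reading off second derivatives. The natural family is $\gamma(t)=\ell(t)^d/\Vert \ell(t)\Vert^d$ with $\ell(t)=x_0+t(v_1x_1+\cdots+v_nx_n)$, which lies in $\Vnd$ and has $\gamma(0)=x_0^d$. Using the multinomial theorem and the orthonormal basis $\vect m_\alpha$, one rewrites
$$\ell(t)^d=\sum_\alpha \sqrt{\tbinom{d}{\alpha}}\,t^{d-\alpha_0}v_1^{\alpha_1}\cdots v_n^{\alpha_n}\,\vect m_\alpha.$$
This expansion is stratified by $\alpha_0$: the $t^0$ term is $x_0^d$, the $t^1$ terms span $T_{x_0^d}\Vnd$ by Lemma \ref{lemma_TS_veronese}, the $t^2$ terms span precisely $W$, and higher order terms span $P$.

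For part (1), if $\vect f\in P$ then all nonzero monomial pairings in $\langle\ell(t)^d,\vect f\rangle$ occur at order $t^3$ or higher. Since $\Vert \ell(t)\Vert^{-d}=1+O(t^2)$ and $\vect f$ is orthogonal to $x_0^d$, normalization cannot create contributions at order $\le 2$. Hence $\langle\gamma''(0),\vect f\rangle=0$ for every choice of $\vect v$, which yields $L_{\vect f}=0$.

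For part (2), assume $\vect f\in W$ and expand it in the orthonormal basis as $\vect f=\sum_{i\le j}\hat f_{ij}\,\vect m_{\alpha^{(ij)}}$, where $\alpha^{(ij)}$ is the exponent with $\alpha_0=d-2$ and a $1$ at positions $i,j$ (a $2$ when $i=j$). A direct comparison with the presentation of $\vect f$ in the theorem gives $\hat f_{ij}=f_{ij}$ in both cases, because the monomial normalizations $\sqrt{d(d-1)}$ and $\sqrt{d(d-1)/2}$ are absorbed by the corresponding multinomial coefficients. Then the degree-$2$ term of $\langle \ell(t)^d,\vect f\rangle$ evaluates to $t^2[\sqrt{d(d-1)/2}\sum_i v_i^2 f_{ii}+\sqrt{d(d-1)}\sum_{i<j}v_iv_jf_{ij}]$. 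Reparametrizing to arc length (dividing the parameter by the initial speed $\sqrt{d}\,\Vert\vect v\Vert$) and writing $u_k=v_k/\Vert\vect v\Vert$ converts $\langle\tilde\gamma''(0),\vect f\rangle$ into the quadratic form $\sqrt{(d-1)/d}\,[\sqrt{2}\sum_i u_i^2 f_{ii}+2\sum_{i<j}u_iu_jf_{ij}]$ in the orthonormal tangent coordinates from Lemma \ref{lemma_TS_veronese}. Since $\langle\vect V,L_{\vect f}\vect V\rangle=\mathrm{II}_{x_0^d}(\vect V,\vect V;\vect f)=\langle\tilde\gamma''(0),\vect f\rangle$, the symmetric matrix representing this quadratic form is exactly the claimed one, with the $\sqrt{2}$ on the diagonal arising from the mismatch between $\binom{d}{(d-2,\ldots,2,\ldots)}$ and $\binom{d}{(d-2,\ldots,1,\ldots,1,\ldots)}$.

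The main bookkeeping obstacle is tracking the orthonormal scalings $\sqrt{\tbinom{d}{\alpha}}$ together with the factor of $2$ produced by symmetrizing off-diagonal pairs $(i,j)$ and $(j,i)$ while simultaneously rescaling $\gamma$ to unit speed; all three effects must combine correctly to produce the single $\sqrt{(d-1)/d}$ prefactor with the $\sqrt{2}$ only on the diagonal. A shortcut is to observe that these identities are precisely the content of \cite[Equations (28), (29)]{CLR2023}, so that the entire argument reduces to translating their expressions into the notation used here.
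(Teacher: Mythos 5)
Your argument is correct, but it takes a genuinely different route from the paper: the paper offers no computation at all for Theorem \ref{weingarten_veronese}, stating only that it follows from Equations (28) and (29) in \cite{CLR2023}, with Lemmas \ref{lemma_TS_veronese} and \ref{lemma_NS_veronese} supplying the tangent/normal setup. You instead derive the result from scratch: parametrize curves as $\ell(t)^d/\Vert\ell(t)\Vert^d$, expand $\ell(t)^d$ in the Bombieri--Weyl orthonormal basis via the multinomial theorem, use the stratification by $\alpha_0$ (order $t^0$ gives $x_0^d$, order $t^1$ the tangent space, order $t^2$ the space $W$, order at least $t^3$ the space $P$), and identify the normal component of the acceleration with the second fundamental form. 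Your constants are right: the coefficients of $\vect f$ in the statement are exactly its coordinates in the orthonormal basis $\vect m_\alpha$; reparametrizing to unit speed divides the quadratic form by $d\Vert\vect v\Vert^2$, producing the prefactor $\sqrt{(d-1)/d}$; and the $\sqrt 2$ on the diagonal comes from $\binom{d}{d-2,2}=d(d-1)/2$ versus $\binom{d}{d-2,1,1}=d(d-1)$ combined with the factor-of-two symmetrization convention, exactly as you say. What your route buys is self-containedness: the paper's main results (Theorems \ref{main:thm}, \ref{main_thm_curv} and \ref{main_thm_vol}) all rest on this Weingarten map, and your computation makes them verifiable without consulting \cite{CLR2023}, at the price of bookkeeping that the paper avoids by citation (the reference carries out essentially this kind of computation, so the mathematical content is the same; the difference is derivation versus import). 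Two steps deserve an explicit sentence in a polished write-up: first, that $\Vert\bm\ell^d\Vert=\Vert\bm\ell\Vert^d$ by the reproducing property (\ref{RKH}), so your normalized curves really lie in $\Vnd$; second, that curves only determine the quadratic form $\vect v\mapsto\mathrm{II}_{x_0^d}(\vect v,\vect v,\vect f)$, so the symmetry of the second fundamental form (polarization) is what upgrades this to the full matrix $L_{\vect f}$ in part (2) and to $L_{\vect f}=0$ in part (1). Both facts are standard, so neither is a gap.
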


Recall that a random symmetric $n\times n$ matrix $L=(\ell_{i,j})$ is $L\sim \mathrm{GOE}(n)$ if $\ell_{i,j}\sim N(0,\tfrac{1}{2})$ for $i\neq j$ and $\ell_{i,i}\sim N(0,1)$ and all entries are independent (except for the symmetry condition). The probability density of $L$ is $(2\pi)^{-\frac{n(n+1)}{4}} \exp(-\tfrac{1}{2}\mathrm{Trace}(L^TL)).$
\begin{corollary}
Let $\vect f\in N_{x_0^d} \Vnd$ and $L_{\vect f}$ be the Weingarten map of $\Vnd$ at $x_0^d$ and $\vect f$. If~$\vect f$ is Gaussian with respect to the Bombieri-Weyl metric then 
$$L_{\vect f}  \sim \sqrt{\frac{2(d-1)}{d}} \,\mathrm{GOE}(n).$$
\end{corollary}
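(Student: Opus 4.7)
The plan is to reduce the claim to a direct variance comparison using the explicit matrix form from Theorem \ref{weingarten_veronese}. Because the Weingarten map is linear in the normal direction and $N_{x_0^d}\Vnd = P \oplus W$ is an orthogonal decomposition, I first write $\vect f = \vect f_P + \vect f_W$ and invoke Theorem \ref{weingarten_veronese}(1) to conclude $L_{\vect f} = L_{\vect f_W}$. Since $\vect f$ is a standard Gaussian with respect to the Bombieri--Weyl inner product and $W$ is a linear subspace, the orthogonal projection $\vect f_W$ is itself a standard Gaussian on $W$.

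Next I identify the coefficients $f_{i,j}$ appearing in Theorem \ref{weingarten_veronese}(2) as Gaussian scalars. Indeed, by the definition of the scaled monomials one checks that
$$\sqrt{d(d-1)}\, x_0^{d-2}x_ix_j = \vect m_{(d-2,0,\dots,1,\dots,1,\dots,0)} \quad (i<j), \qquad \sqrt{\tfrac{d(d-1)}{2}}\, x_0^{d-2}x_i^2 = \vect m_{(d-2,0,\dots,2,\dots,0)},$$
so the basis of $W$ used to expand $\vect f$ in the statement of Theorem \ref{weingarten_veronese} is precisely (a subset of) the orthonormal basis $\{\vect m_\alpha\}$. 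Hence the coefficients $\{f_{i,j}\}_{i \leq j}$ are the coordinates of $\vect f_W$ in an orthonormal basis of $W$, and therefore are i.i.d.\ $N(0,1)$.

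Finally I read off the distribution of $L_{\vect f}$ from the matrix in Theorem \ref{weingarten_veronese}(2). Multiplying the prescribed matrix by $\sqrt{(d-1)/d}$ gives diagonal entries $\sqrt{2(d-1)/d}\, f_{i,i}$ and off-diagonal entries $\sqrt{(d-1)/d}\, f_{i,j}$, so the variances are $\tfrac{2(d-1)}{d}$ on the diagonal and $\tfrac{d-1}{d}$ off the diagonal; moreover, all independent entries of $L_{\vect f}$ (i.e.\ those on or above the diagonal) are independent since the $f_{i,j}$ are. Comparing with the definition $\mathrm{GOE}(n)$, whose diagonal entries have variance $1$ and off-diagonal entries have variance $\tfrac{1}{2}$, shows exactly that $L_{\vect f}$ equals $\sqrt{2(d-1)/d}$ times a $\mathrm{GOE}(n)$ matrix in distribution.

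There is no real obstacle here beyond bookkeeping: the content of the corollary is already packaged in Theorem \ref{weingarten_veronese}, and the only point requiring care is the identification of the scalars $f_{i,j}$ with Bombieri--Weyl coordinates, which exploits exactly the scaling that makes the $\vect m_\alpha$ orthonormal.
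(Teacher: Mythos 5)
Your proof is correct and takes exactly the route the paper intends: the corollary is stated there without proof as an immediate consequence of Theorem~\ref{weingarten_veronese}, and your argument (linearity of $L_{\vect f}$ in the normal direction plus Theorem~\ref{weingarten_veronese}(1) to discard $\vect f_P$, identification of the $f_{i,j}$ as coordinates in the orthonormal Bombieri--Weyl basis of $W$, and the variance comparison $\tfrac{2(d-1)}{d}$ on the diagonal versus $\tfrac{d-1}{d}$ off the diagonal against $\mathrm{GOE}(n)$) is precisely the bookkeeping the paper leaves implicit. No gaps.
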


\subsection{Segre--Veronese manifold}
We now turn to the Segre--Veronese manifold $\mathbb X_{\vect n,\vect d}.$ We first show that $\mathbb X_{\vect n,\vect d}$ is a homogeneous space. This allows us to compute geodesics and the second fundamental form at the distinguished point 
$$\vect E := x_0^{d_1}\otimes \cdots \otimes x_0^{d_r} \in \mathbb X_{\vect n,\vect d}.$$

The Bombieri-Weyl inner product on $\V$ has the property  
\begin{equation}\label{inner_prod_rankone}\langle \vect f_1\otimes \cdots \otimes \vect f_r,\, \vect g_1\otimes \cdots \otimes \vect g_r\rangle := \langle \vect f_1, \vect g_1\rangle \cdots \langle \vect f_r, \vect g_r\rangle.
\end{equation}
Moreover, it is invariant under an orthogonal change of variables; i.e., for orthogonal matrices $Q_1\in O(n_1+1),\ldots, Q_r\in O(n_r+1)$ we have 
\begin{equation}\label{orth_invariance}
\langle (\vect f_1\circ Q_1)\otimes \cdots \otimes (\vect  f_r\circ Q_r),\, (\vect g_1\circ Q_1)\otimes \cdots \otimes (\vect  g_r\circ Q_r)\rangle = \langle \vect f_1\otimes \cdots \otimes \vect f_r,\, \vect g_1\otimes \cdots \otimes \vect g_r\rangle.
\end{equation}
This invariance was studied by Kostlan in \cite{kostlan:93, Kostlan}. We define the action of the group
\begin{equation*}
    G:=\mathbb Z/2\mathbb Z\times O(n_1+1)\times \cdots \times O(n_r+1)
\end{equation*} on $\V$ as the linear extension of the action
$$(\sigma, Q_1,\ldots, Q_r).(\vect f_1 \otimes \cdots \otimes \vect f_r) := (-1)^\sigma \, (\vect f_1\circ Q_1)\otimes \cdots \otimes (\vect  f_r\circ Q_r).$$
By (\ref{orth_invariance}), this action is isometric. Furthermore, the additional $\mathbb Z/2\mathbb Z$ factor makes it act transitively on $\Xnd$. We summarize this in the following lemma.
\begin{lemma}\label{lem_hom_space}
The group $G$ acts isometrically and transitively on $\Xnd$.
\end{lemma}

For our formulation of the Weingarten map of $\mathbb X_{\vect n,\vect d}$ we first need to obtain a certain decomposition of the normal space. We define the spaces 
\begin{align*}
\mathcal W_i &:=  \operatorname{span}\{ \vect m_{\alpha_1} \otimes \cdots \otimes \vect m_{\alpha_r} \mid  (\alpha_i)_0 = d_i-2,\; (\alpha_k)_0 = d_k \text{ for } k\neq i\},\\
\mathcal G_{i,j} &:= \operatorname{span}\{ \vect m_{\alpha_1} \otimes \cdots \otimes \vect m_{\alpha_r} \mid  (\alpha_i)_0 = d_i-1,\; (\alpha_j)_0 = d_j -1,\;  (\alpha_k)_0 = d_k ,\text{ for } k\neq i,j\}
\end{align*}
and set
\begin{equation}\label{WGP}
\begin{aligned}
\mathcal W &:= \bigoplus_{1\leq i \leq r} \mathcal W_{i},\\
\mathcal G &:= \bigoplus_{1\leq i<j\leq r} \mathcal G_{i,j},\\ 
\mathcal P &:= (\mathcal W \oplus \mathcal G)^\perp \cap N_{\vect E} \mathbb X_{\vect n,\vect d}.
\end{aligned} 
\end{equation}
The next result extends Lemma \ref{lemma_NS_veronese} to the case $r\geq 2$.
\begin{lemma}\label{lem_TS}
If $r\geq 2$, the normal space has the orthogonal decomposition $$N_{\vect E} \mathbb X_{\vect n,\vect d} = \mathcal P \oplus \mathcal W \oplus \mathcal G.$$
\end{lemma}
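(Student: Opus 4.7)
The plan is to work entirely in the tensor product orthonormal basis $\{\vect m_{\alpha_1} \otimes \cdots \otimes \vect m_{\alpha_r}\}$ of $\V$ and attach to each such basis vector a \emph{deficiency tuple} $(\delta_1,\ldots,\delta_r)$ defined by $\delta_i := d_i - (\alpha_i)_0 \geq 0$. This single bookkeeping device sorts all basis vectors into the relevant pieces of $\V$ at $\vect E$.

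First, I would translate the spaces $\vect E^\perp$, $T_{\vect E}\Xnd$ and $N_{\vect E}\Xnd$ into conditions on deficiency tuples. The point $\vect E$ itself is the unique basis vector with $\delta_1=\cdots=\delta_r=0$; hence the ambient tangent space $T_{\vect E}\mathbb S(\V) = \vect E^\perp$ is spanned by all basis vectors with $\sum_i \delta_i \geq 1$. Applying Proposition \ref{prop_helpful}(2) together with Lemma \ref{lemma_TS_veronese} factor by factor shows that $T_{\vect E}\Xnd$ is exactly the span of those basis vectors with total deficiency equal to $1$. Consequently $N_{\vect E}\Xnd$, being the orthogonal complement of $T_{\vect E}\Xnd$ inside $\vect E^\perp$, is precisely the span of basis vectors with $\sum_i \delta_i \geq 2$.

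Second, I would read off from the definitions that $\mathcal W_i$ is the span of basis vectors with $\delta_i = 2$ and $\delta_k = 0$ for $k\neq i$, while $\mathcal G_{i,j}$ is the span of basis vectors with $\delta_i = \delta_j = 1$ and $\delta_k = 0$ otherwise. Both patterns have total deficiency $2$, so $\mathcal W_i \subset N_{\vect E}\Xnd$ and $\mathcal G_{i,j} \subset N_{\vect E}\Xnd$. Furthermore, the various $\mathcal W_i$ (as $i$ varies) and $\mathcal G_{i,j}$ (as the unordered pair varies) are spanned by pairwise disjoint subsets of the orthonormal basis of $\V$, so they are mutually orthogonal. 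Hence $\mathcal W \oplus \mathcal G$ is a genuine orthogonal direct sum sitting inside $N_{\vect E}\Xnd$.

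Finally, the decomposition $N_{\vect E}\Xnd = \mathcal P \oplus \mathcal W \oplus \mathcal G$ then follows tautologically from the definition $\mathcal P := (\mathcal W\oplus\mathcal G)^\perp \cap N_{\vect E}\Xnd$ in \eqref{WGP}. I do not anticipate a real obstacle here: the argument reduces to careful counting of deficiency tuples on an orthonormal basis. The only point that needs a moment of care is the identification of $T_{\vect E}\Xnd$ with total deficiency $1$, which requires combining the Veronese tangent basis of Lemma \ref{lemma_TS_veronese} with the product description of the tangent space in Proposition \ref{prop_helpful}(2).
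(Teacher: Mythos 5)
Your proof is correct and follows essentially the same route as the paper: both rest on Proposition \ref{prop_helpful}(2), Lemma \ref{lemma_TS_veronese}, and the orthonormality of the monomial tensor basis, with your deficiency-tuple bookkeeping being a clean repackaging of the paper's direct inner-product checks via (\ref{inner_prod_rankone}). The only (harmless) difference is that you pin down $N_{\vect E}\Xnd$ exactly as the span of basis vectors of total deficiency at least $2$, whereas the paper only proves the containments $\mathcal W,\mathcal G\subset N_{\vect E}\Xnd$, which suffice given the definition of $\mathcal P$.
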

\begin{proof}
By the inner product rule for simple tensors~(\ref{inner_prod_rankone}), and since the monomials $\vect m_{\alpha}$ are orthogonal, the decomposition $\mathcal P \oplus \mathcal W \oplus \mathcal G$ is an orthogonal decomposition. 
Therefore, we only have to show that $\mathcal W,\mathcal G\subset N_{\vect E} \mathbb X_{\vect n,\vect d}$. 

Recall from (\ref{def_segre_veronese2}) that  $\Xnd = \mathbb V_{n_1,d_1}\otimes \cdots \otimes \mathbb V_{n_r,d_r}$, if there is at least one odd $d_i$, and that, if all $d_i$ are even, we have $\Xnd=(\mathbb V_{n_1,d_1}\otimes \cdots \otimes \mathbb V_{n_r,d_r})\ \cup\ -(\mathbb V_{n_1,d_1}\otimes \cdots \otimes \mathbb V_{n_r,d_r})$.
It follows from Proposition \ref{prop_helpful} (2) that
$$
    T_{\vect E} \mathbb X_{\vect n,\vect d} =  T_{x_0^{d_1}}\mathbb V_{n_1,d_1}\otimes x_0^{d_2}\otimes  \cdots \otimes x_0^{d_r} + \cdots + x_0^{d_1}\otimes x_0^{d_2}\otimes \cdots \otimes T_{x_0^{d_r}}\mathbb V_{n_r,d_r}.
$$
Lemma \ref{lemma_TS_veronese} implies that 
$$T_{\vect E} \mathbb X_{\vect n,\vect d} = \bigoplus_{\ell=1}^r \, \operatorname{span}\{ \vect m_{\alpha_1} \otimes \cdots \otimes \vect m_{\alpha_r} \mid  (\alpha_\ell)_0 = d_\ell-1,\;  (\alpha_k)_0 = d_k ,\text{ for } k\neq \ell\}.$$

The space $\mathcal W_i$ is spanned by simple tensors $\vect m_{\alpha_1} \otimes \cdots \otimes \vect m_{\alpha_r}$ such that the $i$-th factor~$\vect m_{\alpha_i}$ is orthogonal to both $T_{x_0^{d_i}} \mathbb V_{n_i,d_i}$ and $x_0^{d_i}$. Using~(\ref{inner_prod_rankone}), this already shows that $\mathcal W_i\perp  T_{\vect E} \mathbb X_{\vect n,\vect d}$. Consequently, $\mathcal W\subset  N_{\vect E} \mathbb X_{\vect n,\vect d}$.

The space $\mathcal G_{i,j}$ is spanned by 
simple tensors $\vect m_{\alpha_1} \otimes \cdots \otimes \vect m_{\alpha_r}$ such that the $i$-th factor $\vect m_{\alpha_i}$ is orthogonal to $x_0^{d_i}$ and the $j$-th factor $\vect m_{\alpha_j}$ is orthogonal to $x_0^{d_j}$. Since $T_{\vect E} \mathbb X_{\vect n,\vect d}$ is spanned by simple tensors that have at most one factor different from $x_0^{d_k}$, the inner product rule (\ref{inner_prod_rankone}) implies that $\mathcal G_{i,j}\perp T_{\vect E} \mathbb X_{\vect n,\vect d}$ for all $i,j$; hence, $\mathcal G\subset N_{\vect E} \mathbb X_{\vect n,\vect d}$.
\end{proof}

\begin{example}\label{ex:segre}
Let us work out the decomposition from Lemma \ref{lem_TS} in the case of the Segre manifold. This is the case $\vect d = \vect 1 = (1,\ldots,1)$; i.e., $\Xnd = \mathbb S^{n_1}\otimes \cdots  \otimes\mathbb S^{n_r}$. Since $H_{n, 1}\cong \mathbb R^{n+1}$, we can view elements in $\V = H_{n_1, 1}\otimes \cdots\otimes H_{n_r,1}$ as $r$-dimensional tensors $\vect F=(F_{i_1,\ldots,i_r})$, where $0\leq i_j\leq n_j$ for $1\leq j\leq r$.
Figure \ref{tensor_fig} shows an order-three tensor illustrating the case $r=3$. We have for the Segre manifold 
$$T_{\vect E} \mathbb X_{\vect n,\vect 1} = \{(F_{i_1,\ldots,i_r})\mid \text{ there is exactly one $i_j$ greater than zero}\} .$$
Moreover, $\mathcal W = \emptyset$ and 
\begin{align*}
\mathcal G &= \{(F_{i_1,\ldots,i_r})\mid \text{ there are exactly two $i_j$s greater than zero}\},\\
\mathcal P &= \{(F_{i_1,\ldots,i_r})\mid \text{ there are at least three $i_j$s greater than zero}\}. 
\end{align*}
Figure \ref{tensor_fig} shows the case for $r=3$; here, the tangent space of $\mathbb X_{\vect n,\vect 1}$ is shown in red, $\mathcal G$ in green and $\mathcal P$ in blue.
\end{example}

\begin{figure}
\begin{center}
\begin{tikzpicture}[on grid, scale = 0.8]  
  \fill[yslant=-0.5, purple!30] (0,0) rectangle (1,2);
  \fill[yslant=-0.5, purple!30] (1,3) rectangle (3,2);
  \fill[yslant=0.5, xslant=-1,purple!30] (6,3) rectangle +(-2,-1);
  \fill[yslant=0.5, xslant=-1,purple!30] (4,2) rectangle +(-1,-2);
  \fill[yslant=0.5,purple!30] (3,-1) rectangle +(1,1);
  \fill[yslant=-0.5,teal!30] (1,0) rectangle (3,2);
  \fill[yslant=0.5,teal!30] (3,-3) rectangle +(1,2);
  \fill[yslant=0.5, xslant=-1,teal!30] (6,2) rectangle +(-2,-2);
  \fill[yslant=0.5,teal!30] (4,-1) rectangle +(2,1);
  \fill[yslant=0.5,blue!30] (4,-3) rectangle +(2,2);

  \draw[yslant=-0.5, thick] (0,0) grid (3,3);
  \draw[yslant=0.5, thick] (3,-3) grid (6,0);
  \draw[yslant=0.5,xslant=-1, thick] (3,0) grid (6,3);
\end{tikzpicture}
\end{center}
\caption{\label{tensor_fig}
Illustration of the tangent and normal spaces of the (spherical) Segre manifold $\mathbb S^2\otimes \mathbb S^2 \otimes \mathbb S^2$. at $\vect E = x_0\otimes x_0\otimes x_0$. We see a $3\times 3\times 3$ tensor $\vect F$ with~$3^3 = 27$ entries indexed by $(i,j,k)$ for $0\leq i,j,k\leq 2$. The white entry has index $(0,0,0)$ and corresponds to the tensor $\vect E$. The tangent and normal spaces of the (spherical) Segre manifold are orthogonal to $\vect E$ and hence spanned by the entries in red, green and blue. The tangent space $T_{\vect E} \mathbb X_{(3,3,3), (1,1, 1)}$ corresponds to the $2+2+2=6$ red entries, where exactly one index of $(i,j,k)$ is greater than 0. The green entries give the component $\mathcal G$ from Example \ref{ex:segre}, where exactly two indices are greater than 0.  The blue entries correspond to $\mathcal P$, where all indices are greater than 0.} 
\end{figure}
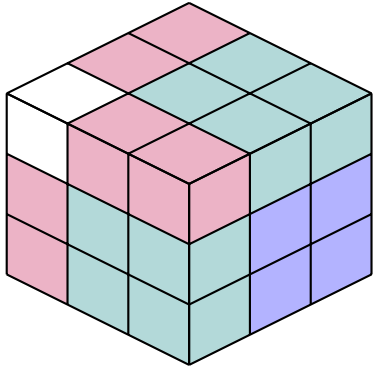

We can now prove a theorem on the structure of the Weingarten map of $\mathbb X_{\vect n,\vect d}$.
\begin{theorem}[Weingarten map of Segre-Veronese manifolds]\label{lm:secFundForm}
Consider a normal vector $\vect F \in N_{\vect E}\mathbb X_{\vect n,\vect d} = \mathcal P\oplus \mathcal W \oplus\mathcal G$ and let $L_{\vect F}$ be the Weingarten map of $\mathbb X_{\vect n,\vect d}$ at $\vect E$ and $\vect F$. Then~$L_{\vect F}$ 
is represented in orthonormal coordinates by the matrix
 \[
 L_{\vect F}= 
 \begin{bmatrix}
    L_{1} & L_{1,2} & \cdots & L_{1,r}\\
    (L_{1,2})^T & L_{2} & \cdots & L_{1,r-1}\\
     & & \ddots & \\
     (L_{1,r})^T & (L_{1,r-1})^T & \cdots & L_{r}
    \end{bmatrix}\in\mathbb R^{n\times n},\quad n=\dim \Xnd,
 \]
defined as follows: let us write~$\vect F = \vect P + \vect W + \vect G$, where $\vect P\in\mathcal P$, $\vect W\in\mathcal W$ and $\vect G \in\mathcal G$. Decompose further:
$$\vect W = \sum_{1\leq i\leq r} \vect W_i,\quad \vect G = \sum_{1\leq i<j\leq r} \vect G_{i,j},$$
where $\vect W_{i} = \vect m_{(d_1,0,\dots,0)} \otimes \cdots \otimes \vect m_{(d_{i-1},0,\dots,0)} \otimes \vect f_i\otimes \vect m_{(d_{i+1},0,\dots,0)} \cdots\otimes \vect m_{(d_{r},0,\dots,0)}\in\mathcal W_i$ with
\begin{align*}
    \vect f_i =& \sum_{1\leq k<\ell\leq n_i} f_{i, (k,\ell)} \,\vect m_{(d_i-2,0,\ldots, \underset{k\text{-th}}{1},\ldots,\underset{\ell\text{-th}}{1},\ldots,0)} + \sum_{1\leq k \leq n_i} f_{i, (k,k)} \,\vect m_{(d_i-2,0,\ldots,\underset{k\text{-th}}{2},\ldots,0)},
\end{align*}
and 
\begin{align*}
    \vect G_{i,j} = \sum_{k=1}^{n_i}\sum_{\ell = 1}^{n_j} g_{(i,j),(k,\ell)} \; \vect m_{(d_1,0,\ldots,0)}\otimes &\dots \otimes \vect m_{(d_i-1,0,\ldots, \underset{k\text{-th}}{1}, \ldots, 0)} \otimes \dots \\
    & \dots \otimes \vect m_{(d_j-1,0,\ldots, \underset{\ell\text{-th}}{1}, \ldots, 0)} \otimes \dots \otimes \vect m_{(d_r,0,\ldots, 0)} .
\end{align*}
Then
$$L_i = \sqrt{\frac{d_i-1}{d_i}} \begin{bmatrix}
\sqrt{2}\cdot f_{i,(1,1)} & f_{i,(1,2)} & \cdots & f_{i,(1,n_i)}\\[0.2em]
f_{i,(1,2)} & \sqrt{2}\cdot f_{i,(2,2)}& \cdots & f_{i,(2,n_i)}\\[0.2em]
\vdots & \vdots & \ddots & \vdots\\[0.2em]
f_{i,(1,n_i)} & f_{i,(2,n_i)} & \cdots & \sqrt{2}\cdot f_{i,(n_i,n_i)}
\end{bmatrix} \in\mathbb R^{n_i\times n_i}$$
and 
$$L_{i,j} = \begin{bmatrix}
g_{(i,j),(1,1)} & g_{(i,j),(1,2)} & \cdots & g_{(i,j),(1,n_j)}\\[0.2em]
g_{(i,j),(2,1)} &  g_{(i,j),(2,2)}& \cdots & g_{(i,j),(2,n_j)}\\[0.2em]
\vdots & \vdots & \ddots & \vdots\\[0.2em]
g_{(i,j),(n_i,1)} & g_{(i,j),(n_i,2)} & \cdots &  g_{(i,j),(n_i,n_j)}
\end{bmatrix} \in\mathbb R^{n_i\times n_j}.$$
(In particular, $L_{\vect F}$ depends only on the components $\mathcal W$ and $\mathcal G $. If $\vect F\in \mathcal P$, we have~$L_{\vect F} = 0$.)
\end{theorem}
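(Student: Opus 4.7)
The plan is to apply Proposition~\ref{prop_helpful2} to $\mathbb M_i = \mathbb V_{n_i,d_i}$ at the distinguished point $\vect E$, and then read off each block of the resulting block matrix by matching terms against the decomposition $\vect F = \vect P + \vect W + \vect G$. Lemma~\ref{lemma_TS_veronese} supplies the orthonormal tangent basis $\vect v_k^{(i)} = \sqrt{d_i}\,x_0^{d_i-1}x_k = \vect m_{(d_i-1,0,\ldots,1_k,\ldots,0)}$ of $T_{x_0^{d_i}}\mathbb V_{n_i,d_i}$, which is exactly the ingredient needed by Proposition~\ref{prop_helpful2}.

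The off-diagonal blocks should be immediate. Proposition~\ref{prop_helpful2}(1) expresses $(L_{i,j})_{k,\ell}$ as the inner product of $\vect F$ with $\vect m_{(d_1,0,\ldots,0)} \otimes \cdots \otimes \vect v_k^{(i)} \otimes \cdots \otimes \vect v_\ell^{(j)} \otimes \cdots \otimes \vect m_{(d_r,0,\ldots,0)}$; this tensor is one of the defining basis elements of $\mathcal G_{i,j}$, hence orthogonal to $\mathcal P \oplus \mathcal W$. So only $\vect G$ contributes, and orthonormality yields the coefficient $g_{(i,j),(k,\ell)}$, matching the stated matrix $L_{i,j}$.

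For the diagonal blocks I will first identify the vector $\vect a_i$ appearing in Proposition~\ref{prop_helpful2}(2), which is obtained by orthogonally projecting $\vect F$ onto $R_i := x_0^{d_1} \otimes \cdots \otimes N_{x_0^{d_i}}\mathbb V_{n_i,d_i} \otimes \cdots \otimes x_0^{d_r}$. A direct check using the inner product rule~\eqref{inner_prod_rankone} shows that $\mathcal G \perp R_i$ (every $\mathcal G_{j,k}$-tensor has two factors orthogonal to the corresponding $x_0^{d_\bullet}$, while $R_i$ has only one), that $\mathcal W_j \perp R_i$ for $j \ne i$, and that $\mathcal W_i \subset R_i$. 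Consequently, the projection is $\vect W_i$ plus a contribution from $\vect P$ that lives in $x_0^{d_1} \otimes \cdots \otimes P_i \otimes \cdots \otimes x_0^{d_r}$, where $P_i$ is as in Lemma~\ref{lemma_NS_veronese}. Writing this as $\vect a_i = \vect f_i + \vect p_i$ with $\vect f_i \in W_i$ and $\vect p_i \in P_i$, Theorem~\ref{weingarten_veronese}(1) annihilates $\vect p_i$, while a brief normalization check confirms that the expansion of $\vect f_i$ matches exactly the normalization in Theorem~\ref{weingarten_veronese}(2) with $d$ replaced by $d_i$. So $L_{\vect f_i}$ is precisely the matrix claimed for $L_i$. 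The final assertion, that $\vect F \in \mathcal P$ forces $L_{\vect F} = 0$, then follows immediately since in that case $\vect W = \vect G = 0$ and every block vanishes.

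The main obstacle is purely notational bookkeeping: tracking which monomial tensors correspond to which basis vector in the block-matrix formula of Proposition~\ref{prop_helpful2}, and checking that the projection onto $R_i$ decomposes cleanly along $\mathcal P \oplus \mathcal W \oplus \mathcal G$. Beyond this, no substantive calculation is required once Proposition~\ref{prop_helpful2} and Theorem~\ref{weingarten_veronese} are invoked.
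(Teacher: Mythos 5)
Your proposal is correct and follows essentially the same route as the paper's (very terse) proof: Proposition~\ref{prop_helpful2} gives the block structure, Lemma~\ref{lemma_TS_veronese} identifies the orthonormal tangent bases so that the off-diagonal entries are the $\mathcal G$-coefficients, and Theorem~\ref{weingarten_veronese} handles the diagonal blocks. The only difference is that you spell out the details the paper leaves implicit — the orthogonality checks for the projection onto $R_i$ and the linearity argument killing the $\vect p_i$-contribution — all of which are accurate.
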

\begin{proof}
Proposition \ref{prop_helpful2} implies the block structure of $L_{\vect F}$. The structure of the diagonal blocks is given by Theorem \ref{weingarten_veronese}. The structure of the off-diagonal blocks comes from the fact that $\vect m_{(d_i-1,0\cdots,\underset{k\text{-th}}{1},\cdots,0)}$ 
for $1\leq k\leq n_i$ are an orthonormal basis of the tangent space~$T_{x_0^{d_i}} \mathbb V_{n_i,d_i}$ by Lemma \ref{lemma_TS_veronese}. 
\end{proof}

An immediate corollary of Theorem \ref{lm:secFundForm} comes next.
\begin{corollary}\label{cor_LF}
Let $\vect F = (F_{\alpha_1,\ldots,\alpha_r})\in N_{\vect E}\mathbb X_{\vect n,\vect d}$ and $L_{\vect F}$ be the Weingarten map of $\mathbb X_{\vect n,\vect d}$ at~$\vect E$ in the normal direction $\vect F$. If $\vect F$ is Gaussian with respect to the Bombieri-Weyl norm, then
$$
 L_{\vect F}\sim 
 \begin{bmatrix}
    L_{1} & L_{1,2} & \cdots & L_{1,r}\\
    (L_{1,2})^T & L_{2} & \cdots & L_{1,r-1}\\
     & & \ddots & \\
     (L_{1,r})^T & (L_{1,r-1})^T & \cdots & L_{r}
    \end{bmatrix}
,$$
where 
$$L_k\sim \sqrt{\frac{2(d_k-1)}{d_k}} \, \mathrm{GOE}(n_k) \quad\text{and}\quad L_{i,j}\sim  \, N(0, I_{n_i}\otimes I_{n_j}),$$
and all blocks $L_k, L_{i,j}, 1\leq k\leq r, 1\leq i<j\leq r,$ are independent.
\end{corollary}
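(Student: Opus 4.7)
The plan is to read off the joint distribution of the blocks directly from the explicit formula for $L_{\vect F}$ given in Theorem \ref{lm:secFundForm}, using only the fact that the coefficients of $\vect F$ in the Bombieri--Weyl orthonormal basis are i.i.d.\ standard Gaussians.

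The first step is to set up independence. Since $\{\vect m_{\alpha_1}\otimes\cdots\otimes\vect m_{\alpha_r}\}$ is an orthonormal basis of $\V$ and $\vect F$ is Gaussian with respect to this inner product, its coefficients on this basis are i.i.d.\ $N(0,1)$. The decomposition $N_{\vect E}\mathbb X_{\vect n,\vect d} = \mathcal P\oplus \bigoplus_i \mathcal W_i \oplus \bigoplus_{i<j}\mathcal G_{i,j}$ from Lemma \ref{lem_TS} is orthogonal and each summand is a coordinate subspace for the monomial basis. Therefore the projections $\vect P$, $\vect W_1,\dots,\vect W_r$ and $\vect G_{i,j}$ are jointly independent, each Gaussian on its respective subspace. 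By Theorem \ref{lm:secFundForm}, the diagonal block $L_i$ depends only on $\vect W_i$ and the off-diagonal block $L_{i,j}$ depends only on $\vect G_{i,j}$; in particular, $L_{\vect F}$ does not depend on $\vect P$. Independence of all the blocks is then immediate.

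The second step handles the off-diagonal blocks. From Theorem \ref{lm:secFundForm}, the $(k,\ell)$-entry of $L_{i,j}$ is the coefficient $g_{(i,j),(k,\ell)}$ of $\vect G_{i,j}$ on the orthonormal basis vector $\vect m_{(d_1,0,\ldots,0)}\otimes\cdots\otimes\vect m_{(d_i-1,0,\ldots,1_k,\ldots,0)}\otimes\cdots\otimes\vect m_{(d_j-1,0,\ldots,1_\ell,\ldots,0)}\otimes\cdots\otimes\vect m_{(d_r,0,\ldots,0)}$. These basis vectors are pairwise distinct, so the entries are i.i.d.\ $N(0,1)$, giving $L_{i,j}\sim N(0,I_{n_i}\otimes I_{n_j})$.

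The third step handles the diagonal blocks. Again by Theorem \ref{lm:secFundForm}, $L_i$ equals $\sqrt{(d_i-1)/d_i}$ times a symmetric matrix whose off-diagonal $(k,\ell)$-entry is $f_{i,(k,\ell)}$ and whose diagonal $(k,k)$-entry is $\sqrt{2}\,f_{i,(k,k)}$, where the $f_{i,(k,\ell)}$ for $k\leq\ell$ are orthonormal coefficients of $\vect W_i$ and hence i.i.d.\ $N(0,1)$. Comparing the resulting entry-wise variances with the definition of $\mathrm{GOE}(n_i)$ recalled before Theorem \ref{weingarten_veronese} (off-diagonal variance $\tfrac12$, diagonal variance $1$) identifies $L_i$ as the stated scalar multiple of a GOE matrix. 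This is really just the tensored-up version of the corollary to Theorem \ref{weingarten_veronese} and involves no new idea.

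Essentially the whole argument is bookkeeping: the only conceptual ingredient is recognizing that the orthogonality of $\mathcal W_1,\ldots,\mathcal W_r,\mathcal G_{1,2},\ldots,\mathcal G_{r-1,r}$ and $\mathcal P$ in the Bombieri--Weyl metric turns the joint Gaussianity of $\vect F$ into joint independence of the block-defining coefficients. The one small bit of care is to check the variance computation for $L_i$ against the normalization of $\mathrm{GOE}$ used in the paper; this is the only place where a numerical mismatch could slip in.
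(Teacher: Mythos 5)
Your route is the same as the paper's: the paper gives no separate argument, presenting Corollary \ref{cor_LF} as immediate from Theorem \ref{lm:secFundForm}, and your first two steps are exactly the bookkeeping that makes this precise. The independence argument (the subspaces $\mathcal P$, $\mathcal W_i$, $\mathcal G_{i,j}$ are orthogonal coordinate subspaces for the monomial basis, so the corresponding coefficient vectors of a Bombieri--Weyl Gaussian are jointly independent) and the treatment of the off-diagonal blocks (the entries $g_{(i,j),(k,\ell)}$ are coefficients on pairwise distinct orthonormal basis vectors, hence i.i.d.\ $N(0,1)$, giving $L_{i,j}\sim N(0,I_{n_i}\otimes I_{n_j})$) are both correct.

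The gap is in your third step, and it sits exactly at the spot you flagged but never carried out. From Theorem \ref{lm:secFundForm}, the $(k,\ell)$ entry of $L_i$ with $k\neq\ell$ is $\sqrt{(d_i-1)/d_i}\,f_{i,(k,\ell)}$ and the $(k,k)$ entry is $\sqrt{2}\,\sqrt{(d_i-1)/d_i}\,f_{i,(k,k)}$, with the $f_{i,(k,\ell)}$, $k\leq \ell$, i.i.d.\ $N(0,1)$. Hence the off-diagonal variance is $\tfrac{d_i-1}{d_i}$ and the diagonal variance is $\tfrac{2(d_i-1)}{d_i}$; comparing with the paper's normalization of $\mathrm{GOE}(n_i)$ (off-diagonal variance $\tfrac12$, diagonal variance $1$) identifies $L_i\sim\sqrt{\tfrac{2(d_i-1)}{d_i}}\,\mathrm{GOE}(n_i)$ --- the same scalar as in the unnamed corollary following Theorem \ref{weingarten_veronese}, which is the $r=1$ case --- and \emph{not} the stated $\sqrt{\tfrac{d_i(d_i-1)}{2}}\,\mathrm{GOE}(n_i)$. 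The two scalars agree only for $d_i\in\{1,2\}$. So your assertion that the variance comparison ``identifies $L_i$ as the stated scalar multiple of a GOE matrix'' is false as written: either the corollary as printed carries an erroneous scalar (this is the more plausible reading, since $\sqrt{2(d_k-1)/d_k}$ is what consistency with the Veronese case and with the maximal-curvature value $\sqrt{2(d-1)/d}$ of Theorem \ref{main_thm_curv} demands), or Theorem \ref{lm:secFundForm} is misstated; in either case a correct proof must carry out the variance computation explicitly and confront the discrepancy rather than assert that it matches.
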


\begin{example}[Weingarten map of the Segre manifold]\label{example1} 
We consider again the case of the Segre manifold $\Xnd = \mathbb S^{n_1}\otimes \cdots \otimes \mathbb S^{n_r}$. In this case, the diagonal blocks of $ L_{\vect F}$ are all zero and the off-diagonal blocks are independent standard normal matrices. 
For instance, when $r=4$, $n_1 = n_2 = 2$ and $n_3 = n_4=1$ we have two $2\times 2$ diagonal zero blocks and two $1\times 1$ diagonal zero blocks:
\begin{align*}
 L_{\vect F} = \left[
\begin{array}{cc|cc|c|c}
    0 & 0 & \vect F_{1100} & \vect F_{1200} & \vect F_{1010} & \vect F_{1001}\\
    0 & 0 & \vect F_{2100} & \vect F_{2200} & \vect F_{2010} &\vect F_{2001} \\ \hline
    \vect F_{1100} & \vect F_{2100} & 0 & 0 & \vect F_{0110} & \vect F_{0101}\\
    \vect F_{1200} & \vect F_{2200} & 0 & 0 & \vect F_{0210} &\vect F_{0201} \\ \hline
    \vect F_{1010} & \vect F_{2010} & \vect F_{0110} & \vect F_{0210} & 0 & \vect F_{0011}\\\hline
    \vect F_{1001} & \vect F_{2001} & \vect F_{0101} & \vect F_{0201} & \vect F_{0011} & 0
\end{array} \right],
\end{align*}
where $\vect F = \sum_{i=0}^2\sum_{j=0}^2\sum_{k=0}^1\sum_{\ell=0}^1 \vect F_{ijk\ell} \; x_i\otimes x_j\otimes x_k\otimes x_\ell$. We will revisit this in Example~\ref{example_segre_L} below.
\end{example}

Finally, we provide an explicit description of geodesics in $\mathbb X_{\vect n,\vect d}$.
\begin{lemma}\label{lem_geodesics}
Let $(-\varepsilon,\varepsilon)\to\gamma(t)$ be a (locally defined) geodesic in $\mathbb X_{\vect n,\vect d}$ parametrized by arc length and passing through $\vect E$. Up to the action by the group $G$, the geodesic $\gamma(t)$ has the form
$$
\gamma(t) = \gamma_1(t) \otimes\cdots \otimes \gamma_r(t),
$$
where 
$$
\gamma_i(t)  = \big(\cos(d_i^{-1/2}\, a_i\,t)\, x_0 + \sin(d_i^{-1/2}\, a_i\,t)\, x_1\big)^{d_i}
$$
and $a_i,1\leq i\leq r$, are real numbers with $a_1^2+\cdots +a_r^2 = 1$.
\end{lemma}
\begin{proof}
We first show that $\gamma(t)$ as in the statement of the theorem is a geodesic.
The first derivative of $\gamma_i(t)$ at $t=0$ is 
\begin{equation}\label{dot_gammai}
\gamma'_i(0) = a_i \, \sqrt{d_i}\, x_0^{d_i-1}x_1 = a_i \, \vect m_{(d_i-1,1,0,\dots,0)}.
\end{equation}
The second derivative is \enlargethispage{\baselineskip}
\begin{equation}\label{dot_gammaii}
\begin{aligned}
\gamma''_i(0) &= -a_i^2 \, x_0^{d_i}  + a_i^2 \, (d_i-1) \,x_0^{d_i-2} x_1^{2}\\ 
&=  -a_i^2 \, \vect m_{(d_i,0,\dots,0)} + a_i^2 \,\sqrt{\tfrac{2(d_i-1)}{d_i}} \,\vect m_{(d_i-2,2,0,\dots,0)}.
\end{aligned}
\end{equation}
These formulas give the first and second derivatives of the factors of $\gamma(t)$.

Next, we compute the derivative of  $\gamma(t)$. The first derivative is 
$$\gamma'(t) = \sum_{i=1}^r \gamma_1(t) \otimes \dots \otimes \gamma_{i-1}(t) \otimes \gamma'_i(t) \otimes \gamma_{i+1}(t) \otimes \dots \otimes \gamma_r(t).$$
The second derivative is
\begin{align*}
    \gamma''(t) = &\sum_{i=1}^r \gamma_1(t) \otimes \dots \otimes \gamma_{i-1}(t) \otimes \gamma''_i(t) \otimes \gamma_{i+1}(t) \otimes \dots \otimes \gamma_r(t) \\
    &\quad + 2\sum_{1\leq i<j \leq r} \gamma_1(t) \otimes \dots  \otimes \gamma'_i(t) \otimes  \dots \otimes \gamma'_j(t) \otimes  \dots \otimes \gamma_r(t) .
\end{align*}
Recall from (\ref{WGP}) the definition of the spaces $\mathcal W$ and $\mathcal G$ and from Lemma \ref{lem_TS} that both spaces are contained in the normal space of $\Xnd$ at $\vect E$.
If we plug in the formulas (\ref{dot_gammai}) for the first derivative of $\gamma_i$ and (\ref{dot_gammaii}) for the second derivative, then we obtain the following expression:
\begin{equation}\label{ddot_gamma}
\gamma''(0) = -(a_1^2 + \cdots + a_r^2)\cdot \vect E + \vect W + \vect G,    
\end{equation}  
where $\vect W\in\mathcal W$ and $\vect G\in\mathcal G$  are given~by 
\begin{equation}\label{defWG}
\begin{aligned}
    \vect W &= \sum_{i=1}^r a_i^2 \sqrt{\tfrac{2(d_i-1)}{d_i}} \, \vect m_{(d_1,0,\ldots,0)} \otimes \dots \otimes \vect m_{(d_i-2,2,0,\dots,0)} \otimes \dots \otimes \vect m_{(d_r,0,\ldots,0)},\\
    \vect G &= 2\sum_{1\leq i<j \leq r} a_i a_j \, \vect m_{(d_1,0,\ldots,0)} \otimes \dots\otimes \vect m_{(d_i-1,1,0,\dots,0)} \otimes\\
    &\hspace{4cm}  \dots \otimes \vect m_{(d_j-1,1,0,\ldots,0)} \otimes \dots \otimes \vect m_{(d_r,0,\cdots,0)}.
    \end{aligned}
\end{equation}

For every $t_0\in(-\varepsilon,\varepsilon)$ there exists $U_0\in G$ such that $U_0\vect E  = U_0\gamma(0) = \gamma(t_0)$, and $N_{\gamma(t_0)}\Xnd = U_0 N_{\vect E}\Xnd$. Therefore, the second derivative of $\gamma(t)$ at $t=t_0$ satisfies
$$\gamma''(t_0) \in U_0\, (\mathbb R\cdot \vect E \oplus N_{\vect E}\Xnd) = \mathbb R\cdot \gamma(t_0) \oplus N_{\gamma(t_0)}\Xnd;$$
i.e., it has zero tangential component and hence is a geodesic. 

From Proposition \ref{prop_helpful} (3) it follows that 
$\Vert  \gamma'(t)\Vert^2 = \Vert \gamma_1'(t)\Vert^2 + \cdots + \Vert \gamma_r'(t)\Vert^2.$
Since $\Vert \gamma_i'(0)\Vert^2 = a_i^2$ by (\ref{dot_gammai}), the geodesic $\gamma(t)$ is parametrized by arc length if and only if 
$$a_1^2 + \cdots + a_r^2=1.$$
The statement of the theorem then follows from the local uniqueness of geodesics (see, e.g., \cite[Theorem 4.27]{Lee2018}) and the fact that we can find a suitable rotation in $G$ such that the tangent vector $\gamma_i'(0)$ points in the direction of $\sqrt{d_i} x_0^{d_i-1}x_1$. 
\end{proof}

%%%%%%%%%%%%%%%
%%%%%%%%%%%%%%%
\section{Reach of the Segre--Veronese manifold}\label{sec:reach}

We compute the reach $\tau(\Xnd)$ of the Segre--Veronese manifold. We adapt the strategy from \cite{CLR2023} and calculate the reach as the minimum of two quantities: 
$$\tau(\Xnd) = \min\{ \rho_1, \rho_2\},$$
where $\rho_1$ is the inverse of the maximal curvature of a curve in $\Xnd$ that is parametrized by arc length:
$$
\frac{1}{\rho_1} = \sup\left\{ \lVert P_{\vect E}(\gamma''(0)) \rVert \mid \gamma \text{ is a geodesic  in $\Xnd$ parametrized by arc length}\right\};
$$
here $P_{\vect E}$ denotes the orthogonal projection onto $T_{\vect E} \mathbb S(\V)$.
The second quantity, $\rho_2 ,$ is the width of the smallest bottleneck:
$$
\rho_2 =  \min \left\{\tfrac{1}{2}\, d_{\mathbb S}(\vect F,\vect E) \bigm| \vect F\in \Xnd,\ \vect F\neq \vect E\ \text{ and }\ \vect F-\vect E \in \mathbb R \cdot \vect E \oplus N_{\vect E} \Xnd\right\}.
$$
The goal of this section is to prove the following proposition, giving formulas for both~$\rho_1$ and $\rho_2$.

\begin{proposition}\label{rho:prop}
    Let $\vect d=(d_1,\ldots,d_r)$ and $\vect n=(n_1,\ldots,n_r)$ be $r$-tuples of positive integers, and let $d:=d_1+\cdots+ d_r\geq 2$. For the (spherical) Segre--Veronese manifold $\Xnd$ of total degree $d$, we have
    \begin{enumerate}
        \item $\rho_1 = \sqrt{\tfrac{d}{2(d-1)}}$.\label{rho1_eqn}\vspace{0.5em}
        \item $\rho_2 = \frac{\pi}{4}.$\label{rho2_eqn}
    \end{enumerate} 
\end{proposition}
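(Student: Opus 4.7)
\textbf{Proof proposal for Proposition \ref{rho:prop}.}

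\textbf{Part (1), computation of $\rho_1$.} The plan is to apply Lemma \ref{lem_geodesics}: by homogeneity (Lemma \ref{lem_hom_space}) it suffices to work at $\vect E$, and every arc-length-parametrized geodesic there has the form $\gamma(t)=\gamma_1(t)\otimes\cdots\otimes\gamma_r(t)$ with $\gamma_i(t)=(\cos w_i(t)\,x_0+\sin w_i(t)\,x_1)^{d_i}$, $w_i(0)=0$, and $\sum_i d_i (w_i'(0))^2=1$. First I will differentiate, setting $f_i(t):=\cos w_i(t)\, x_0+\sin w_i(t)\,x_1$, to obtain at $t=0$:
\[
\gamma_i'(0)=d_i\,w_i'(0)\,x_0^{d_i-1}x_1,\qquad
\gamma_i''(0)=d_i(d_i-1)(w_i'(0))^2 x_0^{d_i-2}x_1^2-d_i(w_i'(0))^2 x_0^{d_i}+d_i w_i''(0)\,x_0^{d_i-1}x_1.
\]
Then I will apply the Leibniz rule for the tensor product to write $\gamma''(0)$ as a sum over $i$ of $x_0^{d_1}\otimes\cdots\otimes\gamma_i''(0)\otimes\cdots\otimes x_0^{d_r}$ plus twice the sum over pairs $i<j$ of $x_0^{d_1}\otimes\cdots\otimes\gamma_i'(0)\otimes\cdots\otimes\gamma_j'(0)\otimes\cdots\otimes x_0^{d_r}$. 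Projecting onto $N_{\vect E}\Xnd$ kills the $\vect E$-component (from $-d_i(w_i'(0))^2 x_0^{d_i}$) and the tangent components (from $d_i w_i''(0)\, x_0^{d_i-1}x_1$, which by Lemma \ref{lemma_TS_veronese} lie in $T_{\vect E}\Xnd$), leaving one contribution in $\mathcal W$ for each $i$ and one in $\mathcal G_{i,j}$ for each $i<j$. Since these pieces live in mutually orthogonal subspaces, setting $u_i:=d_i(w_i'(0))^2$ (so $\sum u_i=1$) and using $\|x_0^{d_i-2}x_1^2\|^2=2/(d_i(d_i-1))$ and $\|x_0^{d_i-1}x_1\|^2=1/d_i$, the Pythagorean theorem gives
\[
\|P_{\vect E}(\gamma''(0))\|^2=2\sum_{i=1}^r\frac{d_i-1}{d_i}u_i^2+4\sum_{i<j}u_iu_j=2-\tfrac{2}{d}\Big(d\sum_i u_i^2/d_i\Big).
\]
(I will use $(\sum_i u_i)^2=1$ to rewrite the cross terms.) Minimizing $\sum_i u_i^2/d_i$ over the simplex via Lagrange multipliers yields $u_i=d_i/d$ and minimum $1/d$, hence the maximum of $\|P_{\vect E}(\gamma''(0))\|^2$ equals $2(d-1)/d$. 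This gives $\rho_1=\sqrt{d/(2(d-1))}$, and the maximizer has $\|\gamma_i'(0)\|^2=u_i=d_i/d$, consistent with Theorem \ref{main_thm_curv}.

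\textbf{Part (2), computation of $\rho_2$.} The plan is to characterize the set of $\vect F\in\Xnd$ with $\vect F-\vect E\in N_{\vect E}\Xnd\oplus\mathbb R\vect E$ (equivalently, $\vect F\perp T_{\vect E}\Xnd$, since $\vect E\perp T_{\vect E}\Xnd$). Write $\vect F=\boldsymbol\ell_1^{d_1}\otimes\cdots\otimes\boldsymbol\ell_r^{d_r}$ with $\boldsymbol\ell_i=\ell_{i,0}x_0+\sum_{k\geq 1}\ell_{i,k}x_k$ and $\sum_k\ell_{i,k}^2=1$. By Proposition \ref{prop_helpful}(2) and Lemma \ref{lemma_TS_veronese}, $T_{\vect E}\Xnd$ is spanned by the tensors $x_0^{d_1}\otimes\cdots\otimes x_0^{d_i-1}x_k\otimes\cdots\otimes x_0^{d_r}$ for $1\leq i\leq r,\ 1\leq k\leq n_i$. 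Using the tensor inner product rule \eqref{inner_prod_rankone} together with the reproducing kernel identity \eqref{RKH}, orthogonality of $\vect F$ to this basis becomes the system
\[
\Big(\prod_{j\neq i}\ell_{j,0}^{d_j}\Big)\cdot\ell_{i,0}^{d_i-1}\ell_{i,k}=0\qquad(1\leq i\leq r,\ 1\leq k\leq n_i).
\]
I will do a short case analysis on the number of indices $j$ with $\ell_{j,0}=0$: if there are two or more, then $\prod_j\ell_{j,0}^{d_j}=0$ automatically and the equations hold; if exactly one index $j_0$ has $\ell_{j_0,0}=0$, then the $i=j_0$ equation forces $\ell_{j_0,0}^{d_{j_0}-1}=0$, which is automatic precisely when $d_{j_0}\geq 2$ (otherwise one is pushed back to the trivial case $\vect F=\pm\vect E$); if no $\ell_{j,0}$ vanishes, then $\ell_{i,k}=0$ for every $i,k\geq 1$, giving $\vect F=\pm\vect E$. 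In all nontrivial cases at least one $\ell_{j,0}$ vanishes, so
\[
\langle\vect F,\vect E\rangle=\prod_{j=1}^r\ell_{j,0}^{d_j}=0,
\]
hence $d_{\mathbb S}(\vect F,\vect E)=\pi/2$. Finally I will exhibit a concrete $\vect F$ realizing this: if $r\geq 2$ take $\vect F=x_1^{d_1}\otimes x_1^{d_2}\otimes x_0^{d_3}\otimes\cdots\otimes x_0^{d_r}$, and if $r=1$ (so $d=d_1\geq 2$) take $\vect F=x_1^{d}$. Both lie in $\Xnd$ and satisfy the orthogonality condition, giving $\rho_2=\tfrac12\cdot\tfrac{\pi}{2}=\tfrac{\pi}{4}$.

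\textbf{Anticipated obstacle.} Part (1) is a direct (if intricate) calculation and the optimization is routine. The subtle point is Part (2): one must be careful that ``$\vect F-\vect E\in N_{\vect E}\Xnd\oplus\mathbb R\vect E$ and $\vect F\neq\vect E$'' really forces at least one $\ell_{j,0}$ to vanish in every relevant branch. The case $d_{j_0}=1$ with exactly one vanishing $\ell_{j_0,0}$ is the delicate one; it is precisely here that the exponent $d_i-1$ in the tangent-basis formula matters, and collapsing this sub-case back to $\vect F=\pm\vect E$ is the key checkpoint ensuring the bound $\pi/4$ cannot be undercut.
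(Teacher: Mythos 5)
Your proposal is correct and follows essentially the same route as the paper: part (1) reproduces the paper's computation of $\lVert P_{\vect E}(\gamma''(0))\rVert^2$ via Lemma \ref{lem_geodesics} and the block decomposition of the normal space, with your substitution $u_i=\theta_i^2$ merely recasting the paper's Lagrange analysis as a convex minimization over the simplex, and part (2) is the paper's reproducing-kernel argument, to which you add explicit bottleneck witnesses that the paper leaves implicit. One small slip: when exactly one $\ell_{j_0,0}=0$ and $d_{j_0}=1$, the orthogonality equations force $\boldsymbol\ell_{j_0}=0$, so that branch is empty rather than collapsing to $\vect F=\pm\vect E$; either way it contributes no competitor and the conclusion $\rho_2=\tfrac{\pi}{4}$ stands.
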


We prove Proposition \ref{rho:prop} (\ref{rho1_eqn}) in Section \ref{ext_curv} and (\ref{rho2_eqn}) in Section \ref{bottlenecks}. Because the reach is the minimum of $\rho_1$ and $\rho_2$, this proves Theorem \ref{main:thm}.

\subsection{Extremal curvature of the Segre--Veronese manifold}\label{ext_curv} 

Let $\gamma(t)$ be a geodesic in $\Xnd$ parametrized by arc length. By orthogonal invariance (Lemma \ref{lem_hom_space}) we can assume that $\gamma(0)=\vect E$. 
As shown in Lemma~\ref{lem_geodesics}, geodesics in $\Xnd$ parametrized by arc length that pass through $\vect E$ can, without loss of generality, be written as 
$$\gamma(t) = \gamma_1(t) \otimes \dots \otimes \gamma_r(t) $$
where $
\gamma_i(t)  = (\cos(d_i^{-1/2}\, a_i\,t)\, x_0 + \sin(d_i^{-1/2}\, a_i\,t)\, x_1)^{d_i}
$
and $a_i,1\leq i\leq r$, are real numbers with  $a_1^2+\cdots +a_r^2 = 1$.
By (\ref{ddot_gamma}), we have $P_{\vect E}(\gamma''(0))  = \vect W + \vect G\in \mathcal W \oplus \mathcal G$, where the latter are defined in (\ref{defWG}). 
As $\mathcal W \perp \mathcal G$ and the $\vect m_{\alpha}$ form orthonormal bases, the magnitude of~$P_{\vect E}(\gamma''(0))$ is
\begin{align*}
    \lVert P_{\vect E}(\gamma''(0)) \rVert^2 &= \lVert \vect W \rVert^2 + \lVert \vect G \rVert^2 \\
    &= \sum_{i=1}^r a_i^4 \cdot \frac{2(d_i-1)}{d_i} + 4\sum_{1\leq i < j\leq r} a_i^2 a_j^2\\
    &= \sum_{i=1}^r a_i^4 \cdot \frac{2(d_i-1)}{d_i} + 2\sum_{i=1}^r a_i^2 \sum_{j \neq i}  a_j^2\\
    &= \sum_{i=1}^r \left( a_i^4 \cdot \frac{2(d_i-1)}{d_i} + 2 a_i^2(1-a_i^2) \right),\qquad \text{(because $\sum_{i=1}^r a_i^2=1$)}\\    
    &= 2\sum_{i=1}^r \left(a_i^2 - \frac{a_i^4}{d_i}\right) .
\end{align*}
To maximize this expression under the constraint $\sum_{i=1}^r a_i^2=1$ we consider the Lagrange function 
\begin{align*}
    \mathcal L(a_1,\dots,a_r,\lambda) := \sum_{i=1}^r \left(a_i^2 - \frac{a_i^4}{d_i} \right) - \lambda\left(1-\sum_{i=1}^r a_i^2\right) .
\end{align*}
Setting the derivatives of $\mathcal L$ to zero, we have
$$
    0 = \frac{\partial \mathcal L}{\partial a_i} = 2 a_i - \frac{4}{d_i} a_i^3 + 2\lambda a_i \quad 
    \Longrightarrow \quad a_i = \sqrt{\frac{d_i(1+\lambda)}{2}}  \quad\text{ or } \quad a_i = 0.
$$

Let us first consider the case when the $a_i$ are not equal to zero.  In this case, the equation $\sum_{i=1}^r a_i^2 = 1$ implies 
$$ 1 =  \sum_{i=1}^r a_i^2 = \sum_{i=1}^r \frac{d_i(1+\lambda)}{2} = \frac{d(1+\lambda)}{2} ,$$
 where $d=d_1+\cdots +d_r$ is the total degree. This shows $\lambda = \frac{2}{d} - 1$, so that 
 $$a_i = \sqrt{\frac{d_i}{d}}.$$ 
Thus, in this case
\begin{align*}
     \lVert P_{\vect E}(\gamma''(0)) \rVert &= \sqrt{ 2 \sum_{i=1}^r \frac{d_i}{d} - \frac{d_i}{d^2}} = \sqrt{\frac{2(d-1)}{d}}
\end{align*}
For the other critical values of $(a_1,\ldots,a_r)$ we get $\sqrt{\frac{2(d'-1)}{d'}}$, where $d' = \sum_{i\in I} d_i$ is the total degree of a subset $I\subset\{1,\ldots,r\}$ of factors. Since $x\mapsto \sqrt{\frac{2(x-1)}{x}}$ is an increasing function for $x\geq 1$, this shows that $\sqrt{\frac{2(d-1)}{d}}$ is indeed the maximal curvature. It also shows that $\sqrt{\frac{2(d_\ell-1)}{d_\ell}}$ is the minimal curvature, where $d_\ell = \min\{d_1,\ldots,d_r\}$. 
\begin{proof}[Proof of Theorem \ref{main_thm_curv}]
We have shown above that $\sqrt{\tfrac{2(d-1)}{d}}$ is the maximal curvature, and that $\sqrt{\tfrac{2(d_\ell-1)}{d_\ell}}$, where $d_\ell = \min\{d_1,\ldots,d_r\}$, is the minimal curvature. The geodesics that attain these curvatures are given by the critical values $a_i$, and, as shown by \cite{CLR2023}, $\gamma_i(t) = \big(\cos(d_i^{-1/2}\, a_i\,t)\, x_0 + \sin(d_i^{-1/2}\, a_i\,t)\, x_1\big)^{d_i}$ is a geodesic in $\mathbb{V}_{n_i,d_i}$.
\end{proof}

\subsection{Bottlenecks of the Segre--Veronese manifold}\label{bottlenecks}
We compute $\rho_2$, the width of the smallest bottleneck of the Segre--Veronese manifold.

Recall that $\rho_2$ is the minimum over the distances $\tfrac{1}{2}\, d_{\mathbb S}(\vect F,\vect E)$ where~$\vect F\in \Xnd$ with $\vect F\neq \vect E$ and $\vect F-\vect E\in \mathbb R\cdot \vect E\oplus N_{\vect E} \Xnd $. The latter is equivalent to 
$$\langle \vect F-\vect E, \vect G\rangle = 0\quad\text{for all}\quad \vect G\in T_{\vect E} \mathbb X_{\vect n,\vect d}.$$ We have 
$
    T_{\vect E} \mathbb X_{\vect n,\vect d} =  T_{x_0^{d_1}}\mathbb V_{n_1,d_1}\otimes x_0^{d_2}\otimes  \cdots \otimes x_0^{d_r} + \cdots + x_0^{d_1}\otimes x_0^{d_2}\otimes \cdots \otimes T_{x_0^{d_r}}\mathbb V_{n_r,d_r}
$ 
by Proposition \ref{prop_helpful}~(2).
We check that $\vect F-\vect E$ is orthogonal to each summand in this decomposition: 
let us write  
$$\vect F = \ell_1^{d_1} \otimes \cdots \otimes \ell_r^{d_r}$$
where the $\ell_i$ are linear forms and consider the inner product of $\vect F-\vect E$ with elements from the first summand in the decomposition of $T_{\vect E} \mathbb X_{\vect n,\vect d}$ above. By Lemma \ref{lemma_TS_veronese}
the monomials 
$x_0^{d_1-1}x_k$, for $1\leq k\leq n_1$, span the tangent space  $T_{x_0^{d_1}} \mathbb V_{n_1,d_1}$.
Consider $\vect G = (x_0^{d_1-1}x_k)\otimes x_0^{d_2}\otimes \cdots \otimes x_0^{d_r}\in T_{x_0^{d_1}} \mathbb V_{n_1,d_1}\otimes x_0^{d_2}\otimes \cdots \otimes x_0^{d_r}$. We have that 
\begin{align*}
\big\langle \vect F-\vect E,\ \vect G \big\rangle = \big\langle \vect F,\ (x_0^{d_1-1}x_k)\otimes x_0^{d_2}\otimes \cdots \otimes x_0^{d_r} \big\rangle & \stackrel{\text{by (\ref{inner_prod_rankone})}}{=}  \langle \ell_1^{d_1}, x_0^{d_1-1}x_k\rangle\, \prod_{i=2}^r \langle \ell_i^{d_i},x_0^{d_i}\rangle\\
&\stackrel{\text{by (\ref{RKH})}}{=} \langle \ell_1, x_0\rangle^{d_1-1}\, \langle \ell_1,x_k\rangle\, \prod_{i=2}^r \langle \ell_i,x_0\rangle^{d_i}.
\end{align*}
This inner product is zero for every $1\leq k\leq n_1$ if either $\ell_1=x_0$ or $\langle \ell_1,x_0\rangle =0$. 
We proceed similarly for the other summands in the decomposition of $T_{\vect E} \mathbb X_{\vect n,\vect d}$. 

Ultimately, we find that 
$\langle \vect F-\vect E, \vect G\rangle = 0$ for all $\vect G\in T_{\vect E} \mathbb X_{\vect n,\vect d}$ if and only if either $\ell_1=\cdots=\ell_r=x_0$ or there is at least one $\ell_i$ with $\langle \ell_i, x_0\rangle=0$, in which case $\langle \vect F,\vect E\rangle =0$ by (\ref{inner_prod_rankone}). Since $\vect F\neq \vect E$, it must be that the latter holds. \enlargethispage{\baselineskip}

Therefore, the bottlenecks of $\Xnd$ all have width $\arccos 0 = \frac{\pi}{2}$, so $\rho_2 = \frac{1}{2} \cdot \frac{\pi}{2} = \frac{\pi}{4}.$

%%%%%%%%%%%%%%%
\medskip
%%%%%%%%%%%%%%%
\section{Volume of the tubular neighborhood}\label{sec_volume}

Recall from Theorem \ref{main:thm} that the reach of the (spherical) Segre--Veronese manifold is~$\tau(\Xnd)=\frac{\pi}{4}$, if $d\leq5$, and $\tau(\Xnd)=\sqrt{\frac{d}{2(d-1)}}$, if $d>5$. 
In this section we prove Theorem \ref{main_thm_vol} by computing the volume of the tubular neighborhood for $\varepsilon < \tau(\Xnd)$
$$U(\varepsilon) := \left\{\vect F\in\mathbb S(\V)\ \Bigm|\ d_{\mathbb S}(\vect F, \Xnd) < \varepsilon\right\}.$$
The proof will be completed in Section \ref{sec:matchings} below.

For the computation we use \emph{Weyl's tube formula} \cite{Weyl39}.
We denote
$$n =  \dim \Xnd = n_1+\cdots + n_r, \quad N = \dim(\mathbb S(\V)),$$
and
$$J_i(\varepsilon) = \int_{0}^{\varepsilon}  (\sin \phi)^{N-n+2i-1} \cdot (\cos\phi)^{n-2i}\; \mathrm d \phi = \int_{t=0}^{\tan \varepsilon}  \frac{t^{N-n+2i-1}}{\left(1+t^2\right)^{\frac{N+1}{2}}}\; \mathrm d t.$$
Then Weyl's tube formula implies that the volume of $U(\varepsilon)$ is given as the following linear combination of the functions $J_i$:
\begin{equation}\label{weyl_tube_formula}\mathrm{vol}(U(\varepsilon)) = \sum_{0\leq 2i\leq n} \kappa_{i}\cdot J_{i}(\varepsilon),\end{equation}
with coefficients given by
$$\kappa_{i} = \int_{\vect G\in \Xnd} \left(\int_{\vect F \in N_{\vect G} \Xnd\ :\ \Vert \vect F\Vert = 1} m_{2i}(L_{\vect F}) \;\mathrm d \vect F\right)\; \mathrm d \vect G,$$
where $m_{2i}(L_{\vect F})$ denotes the sum of the $2i$-principal minors of the Weingarten map $L_{\vect F}$ in the normal direction $\vect F$. The coefficients $\kappa_{i}$ are called \emph{curvature coefficients}; they are isometric invariants of $\Xnd$.

It follows from Lemma \ref{lem_hom_space} that the integral in the formula for $\kappa_{i}$ is independent of~$\vect G$, so that 
\begin{equation}\label{formula_kappa}
\kappa_{i}= \mathrm{vol}(\Xnd) \; \int_{\vect F \in N_{\vect E} \Xnd\ :\ \Vert \vect F\Vert = 1} m_{2i}(L_{\vect F}) \;\mathrm d \vect F,
\end{equation}
where now the inner integral is over the sphere in the normal space of $\Xnd$ at the point $\vect E=x_0^{d_1}\otimes\cdots\otimes x_0^{d_r}$. The volume of the (spherical) Segre--Veronese manifold is computed next.
\begin{lemma}\label{lemma_volume_Xnd}
$\displaystyle \mathrm{vol}(\Xnd) = \frac{\sqrt{(d_1\cdots d_r)^n}}{2^{r-1}}\cdot \mathrm{vol}(\mathbb S^{n_1}) \cdots \mathrm{vol}(\mathbb S^{n_r})$.
\end{lemma}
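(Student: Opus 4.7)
The plan is to realize $\Xnd$ as a quotient of a product of Veronese varieties via the tensor-product parametrization, and then compute the volume of each Veronese factor directly. By Proposition~\ref{prop_helpful}(3), the map
$$\psi\colon \mathbb V_{n_1,d_1}\times\cdots\times\mathbb V_{n_r,d_r}\longrightarrow \Xnd,\qquad (\vect f_1,\ldots,\vect f_r)\mapsto \vect f_1\otimes\cdots\otimes\vect f_r,$$
is a surjective local isometry, so the area formula gives $\mathrm{vol}(\Xnd)=(\deg\psi)^{-1}\prod_{i=1}^{r}\mathrm{vol}(\mathbb V_{n_i,d_i})$, and the task reduces to computing each Veronese volume and the degree of $\psi$.

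For the Veronese volume, I would use the parametrization $\phi_i\colon \mathbb S^{n_i}\to H_{n_i,d_i}$, $\bm\ell\mapsto \bm\ell^{d_i}$. By the orthogonal invariance of the Bombieri-Weyl inner product, the Jacobian of $\phi_i$ is constant on $\mathbb S^{n_i}$, so it is enough to compute it at $\bm\ell=x_0$: the differential sends $x_k\in T_{x_0}\mathbb S^{n_i}$ to $d_i\,x_0^{d_i-1}x_k$, which by Lemma~\ref{lemma_TS_veronese} has norm $\sqrt{d_i}$. Thus $\phi_i$ uniformly scales each tangent direction by $\sqrt{d_i}$, with Jacobian $d_i^{n_i/2}$. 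A short parity check---$\phi_i$ is injective onto $\mathbb V_{n_i,d_i}$ for odd $d_i$, whereas for even $d_i$ it is two-to-one onto one of the two disjoint components of $\mathbb V_{n_i,d_i}=\{\bm\ell^{d_i}\}\sqcup\{-\bm\ell^{d_i}\}$---shows in both cases that $\mathrm{vol}(\mathbb V_{n_i,d_i})=d_i^{n_i/2}\,\mathrm{vol}(\mathbb S^{n_i})$.

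For the degree of $\psi$, any preimage $(\vect f_1,\ldots,\vect f_r)$ of $\vect F\in\Xnd$ can be replaced by $(\epsilon_1\vect f_1,\ldots,\epsilon_r\vect f_r)\in\prod_i\mathbb V_{n_i,d_i}$ with $\epsilon_i\in\{\pm 1\}$ (since $-\vect f_i\in\mathbb V_{n_i,d_i}$), and the resulting tensor product equals $\vect F$ precisely when $\prod_i\epsilon_i=1$. By the essential uniqueness of the rank-one decomposition modulo such scalings, these $2^{r-1}$ sign tuples exhaust the preimages, so $\deg\psi=2^{r-1}$. Combining the three pieces produces the claimed formula. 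The only real bookkeeping is the parity case split in the Veronese volumes, but the two factors of $2$ arising there cancel cleanly; everything else is a direct computation from Proposition~\ref{prop_helpful} and Lemma~\ref{lemma_TS_veronese}.
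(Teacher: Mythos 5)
Your proof is correct and takes essentially the same route as the paper: factor $\Xnd$ through the $2^{r-1}\!:\!1$ local isometry $\psi$ of Proposition~\ref{prop_helpful} and multiply the Veronese volumes. The only difference is that the paper cites \cite{EK1995} for $\mathrm{vol}(\mathbb V_{n_i,d_i})=\sqrt{d_i^{n_i}}\,\mathrm{vol}(\mathbb S^{n_i})$ and asserts the degree $2^{r-1}$ without justification, whereas you derive both (the constant Bombieri--Weyl Jacobian $d_i^{n_i/2}$ with the parity case split, and the sign-tuple count via uniqueness of rank-one factorizations), making your version self-contained.
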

\begin{proof}
We consider the map $\psi$ from Proposition \ref{prop_helpful} in the case of $\Xnd$.
If there is at least one odd $d_i$, the map $\psi$ is $2^{r-1}:1$. 
Proposition \ref{prop_helpful} (3) therefore implies  
$$\mathrm{vol}(\Xnd) \stackrel{\text{by (\ref{def_segre_veronese2})}}{=} \mathrm{vol}(\mathbb V_{n_1,d_1}\otimes \cdots \otimes \mathbb V_{n_r,d_r}) =  \frac{1}{2^{r-1}}\cdot  \mathrm{vol}(\mathbb V_{n_1,d_1}) \cdots \mathrm{vol}(\mathbb V_{n_r,d_r}).$$
On the other hand, if all $d_i$ are even, $\psi$ is $2^{r}:1$ and we have
$$\mathrm{vol}(\Xnd) \stackrel{\text{by (\ref{def_segre_veronese2})}}{=} 2\cdot \mathrm{vol}(\mathbb V_{n_1,d_1}\otimes \cdots \otimes \mathbb V_{n_r,d_r}) = 2\cdot \frac{1}{2^{r}}\cdot  \mathrm{vol}(\mathbb V_{n_1,d_1}) \cdots \mathrm{vol}(\mathbb V_{n_r,d_r}).$$
Finally, $\mathrm{vol}(\Vnd)  =\sqrt{d^n}\cdot \mathrm{vol}(\mathbb S^n)$ (see, e.g., \cite{EK1995}).
\end{proof}
\begin{remark}
The volume of the $k$-sphere is 
$$\mathrm{vol}(\mathbb S^k) = \frac{2 \pi^\frac{k+1}{2}}{\Gamma (\tfrac{k+1}{2})}.$$
\end{remark}

The main task in computing the volume of $U(\varepsilon)$ therefore is integrating the principal minors of the Weingarten map $L_{\vect F}$ over the normal space. For this we pass from the uniform distribution on the sphere to the Gaussian distribution. Since $L_{\lambda \cdot \vect F} = \lambda \cdot L_{\vect F}$ for $\vect F \in N_{\vect E} \Xnd$ and $\lambda \in\mathbb R$, we have 
\begin{equation}\label{formula_kappa_hom}
m_{2i}(L_{\lambda  \cdot  \vect F})=\lambda^{2i} \cdot m_{2i}(L_{\vect F}).
\end{equation}

Suppose that $\vect F$ is a Gaussian vector in the normal space, that is, a random tensor in $N_{\vect E}\Xnd$ with probability distribution $(2\pi)^{-\frac{c}{2}} \exp(-\tfrac{1}{2}\Vert \vect F\Vert^2)$. Then the two random variables $\Vert \vect F \Vert$ and $\vect F/\Vert \vect F \Vert$ are independent. We define the scalars
$$
\lambda_i :=  \mean_{\vect F \in N_{\vect E}\Xnd \text{ Gaussian }}\;\Vert \vect F\Vert^{2i}. 
$$
Using (\ref{formula_kappa_hom}) we can then pass between the uniform distribution on the sphere and the Gaussian distribution as follows:
$$\mean_{\vect F \in N_{\vect E}\Xnd \text{ Gaussian }}  m_{2i}(L_{\vect F})= \lambda_i\cdot \mean_{\vect F \in N_{\vect E}\Xnd \text{ uniform in the sphere }} m_{2i}(L_{\vect F}) .$$
Since $\Vert \vect F\Vert^2$ has a $\chi_c^2$-distribution with $c=\dim N_{\vect E} \Xnd$ degrees of freedom, $\lambda_i$ is the $i$-th moment of $\chi_c^2$; i.e., 
$$\lambda_i = 2^i\, \frac{\Gamma(i+\frac{c}{2})}{\Gamma(\frac{c}{2})}.$$
We have thus proved the following reformulation of (\ref{formula_kappa}).
\begin{lemma}\label{lemma_kappa2i}
Let $c=\dim N_{\vect E} \Xnd$. Then
$
\kappa_{i}= \mathrm{vol}(\Xnd) \cdot \mathrm{vol}(\mathbb S^{c-1}) \cdot   \, \theta_i,
$
where $$\theta_i := \frac{\Gamma(\frac{c}{2})}{2^i\,\Gamma(i+\frac{c}{2})}\cdot \mean_{\vect F \in N_{\vect E}\Xnd \; \mathrm{ Gaussian }} m_{2i}(L_{\vect F}).$$
\end{lemma}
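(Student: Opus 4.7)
The plan is to reformulate the integral in (\ref{formula_kappa}) by splitting a Gaussian random vector in $N_{\vect E}\Xnd$ into its radial and angular parts, exactly in the spirit of the passage preceding the lemma. Let $\vect F$ be Gaussian in $N_{\vect E}\Xnd$ with density $(2\pi)^{-c/2} \exp(-\tfrac{1}{2}\|\vect F\|^2)$, and write $R := \|\vect F\|$, $\vect U := \vect F/R$. The classical polar decomposition of a standard Gaussian gives that $R$ and $\vect U$ are independent, $\vect U$ is uniformly distributed on the unit sphere of $N_{\vect E}\Xnd$ (which is an $(c-1)$-sphere), and $R^2 \sim \chi^2_c$.

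By the homogeneity identity (\ref{formula_kappa_hom}), one has $m_{2i}(L_{\vect F}) = R^{2i}\, m_{2i}(L_{\vect U})$. Taking expectations and invoking independence,
$$\mean_{\vect F\ \text{Gaussian}} m_{2i}(L_{\vect F}) \;=\; \mean[R^{2i}] \cdot \mean [m_{2i}(L_{\vect U})] \;=\; \lambda_i \cdot \mean [m_{2i}(L_{\vect U})].$$
Since $\vect U$ is uniform on the $(c-1)$-sphere in $N_{\vect E}\Xnd$, we can rewrite
$$\mean[m_{2i}(L_{\vect U})] \;=\; \frac{1}{\mathrm{vol}(\mathbb S^{c-1})} \int_{\vect F \in N_{\vect E}\Xnd\ :\ \|\vect F\|=1} m_{2i}(L_{\vect F})\, \mathrm d\vect F.$$

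Solving the previous two displayed equations for the spherical integral and substituting into (\ref{formula_kappa}) yields
$$\kappa_{2i} \;=\; \mathrm{vol}(\Xnd)\cdot \mathrm{vol}(\mathbb S^{c-1})\cdot \frac{1}{\lambda_i} \cdot \mean_{\vect F\ \text{Gaussian}} m_{2i}(L_{\vect F}).$$
Finally, plugging in the known $i$-th moment of the $\chi^2_c$ distribution, $\lambda_i = 2^i\,\Gamma(i+\tfrac{c}{2})/\Gamma(\tfrac{c}{2})$ (which is already recorded in the text above the lemma), gives the stated identity.

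There is essentially no obstacle here: the argument is the standard Gaussian-to-uniform conversion applied to a function that is $2i$-homogeneous in the normal direction. The only points that must be cited precisely are the polar decomposition of a Euclidean Gaussian, the homogeneity relation (\ref{formula_kappa_hom}), and the chi-square moment formula, all of which are already available.
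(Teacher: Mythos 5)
Your proof is correct and follows essentially the same route as the paper: both use the homogeneity relation (\ref{formula_kappa_hom}) together with the polar decomposition of the Gaussian (independence of $\Vert \vect F\Vert$ and $\vect F/\Vert \vect F\Vert$, with $\Vert \vect F\Vert^2\sim\chi^2_c$) to convert the spherical integral in (\ref{formula_kappa}) into a Gaussian expectation divided by $\lambda_i$. The only cosmetic difference is that you spell out the normalization $\mean[m_{2i}(L_{\vect U})]=\mathrm{vol}(\mathbb S^{c-1})^{-1}\int_{\Vert\vect F\Vert=1} m_{2i}(L_{\vect F})\,\mathrm d\vect F$ explicitly, which the paper leaves implicit.
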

For computing the expectation of $m_{2i}(L_{\vect F})$ we can rely on Corollary \ref{cor_LF}. Recall that this corollary implies that if $\vect F$ is Gaussian, $L_{\vect F}$ is a random symmetric matrix with independent blocks 
\begin{equation}\label{SFF_Xnd_formula}
    L_{\vect F} \sim 
\begin{bmatrix}
L_{1}  & \cdots & L_{1,r}\\
    &  \ddots & \\
    (L_{1,r})^T &  \cdots & L_{r}
\end{bmatrix},\quad 
\begin{array}{ll}
L_k\sim \sqrt{\tfrac{2(d_k-1)}{d_k}} \, \mathrm{GOE}(n_k),\\[1em]
L_{i,j}\sim  \, N(0, I_{n_i}\otimes I_{n_j}) .
\end{array}
\end{equation}
In general it is difficult to evaluate the expected value of the minors of this random matrix. We make an attempt using graph theory in the next subsection. 

\subsection{Perfect matchings in graphs and random determinants} \label{sec:matchings}
In this section we give a formula for $\mean m_{2i}(L_{\vect F})$ when $\vect F$ is Gaussian using concepts from graph theory. In the following, the degrees $\vect d=(d_1,\ldots,d_r)$ are fixed. Define the following random symmetric matrix with independent blocks:
$$L_{\vect d}(v_1,\ldots,v_r) := \begin{bmatrix}
L_{1}  & \cdots & L_{1,r}\\
    &  \ddots & \\
    (L_{1,r})^T &  \cdots & L_{r}
\end{bmatrix},\quad 
\begin{array}{ll}
L_k\sim \sqrt{\tfrac{2(d_k-1)}{d_k}} \, \mathrm{GOE}(v_k),\\[1em]
L_{i,j}\sim  \, N(0, I_{v_i}\otimes I_{v_j}) .
\end{array}
$$
This differs from (\ref{SFF_Xnd_formula}) in that we allow the sizes of the blocks to be arbitrary, not necessarily given by the dimension $n_i = \dim \mathbb V_{n_i,d_i}$. 
We can write the expected principal minors of $L_{\vect F}$ as 
$$\mean_{\vect F \in N_{\vect E}\Xnd \; \mathrm{ Gaussian }} m_{2i}(L_{\vect F}) = 
\sum_{\substack{v_1,\ldots,v_r\in\mathbb N:\ v_i\leq n_i\\[0.2em]v_1+\cdots+v_r = 2i}} \mean \det L_{\vect d}(v_1,\ldots,v_r).$$

Recall the definition of $D_{\vect d}(v_1,\ldots,v_r)$ from (\ref{def_D}): for a tuple $(v_1,\ldots,v_r)$ of nonnegative integers let $G=(V,E)$ be the complete graph on $v:=v_1+\cdots+v_r$ vertices where the vertices are partitioned into $r$ groups $V= \mathcal I_1\sqcup\cdots\sqcup\mathcal I_r$ of cardinalities $\vert \mathcal I_k\vert = v_k$. The weight $w(e)$ of an edge between vertices in group $\mathcal I_k$ is $w(e)=\frac{d_k-1}{d_k}$. The weight of an edge across groups is $1$. Given a perfect matching $C\subset E$ its weight is~$w(C):=\prod_{e\in C} w(e).$
Then, 
$$D_{\vect d}(v_1,\ldots,v_r) = (-1)^\frac{v}{2} \sum_{C\subset E\; \text{\normalfont perfect matching}} w(C) .$$
The main goal of this section is to prove the following characterization of the function~$D_{\vect d}$. In combination with (\ref{weyl_tube_formula}), Lemma \ref{lemma_volume_Xnd} and Lemma \ref{lemma_kappa2i} the next proposition completes the proof of Theorem \ref{main_thm_vol}.
\begin{proposition}\label{thm_graphs}
Let $(v_1,\ldots,v_r)$ be nonnegative integers. Then
$$D_{\vect d}(v_1,\dots,v_r) = \mean \det(L_{\vect d}(v_1,\ldots,v_r)).$$
\end{proposition}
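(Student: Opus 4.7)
The plan is to compute $\mean \det L_{\vect d}(m_1,\ldots,m_r)$ by combining the Leibniz expansion of the determinant with Wick's theorem (Isserlis' identity) for products of centered Gaussians, and then identifying the resulting sum over pairings with the sum over perfect matchings that defines $D_{\vect d}$.

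Writing $L = L_{\vect d}(m_1,\ldots,m_r)$ and $m = m_1+\cdots+m_r$, I would start from the Leibniz formula
\[
\mean \det L \;=\; \sum_{\sigma\in S_m} \mathrm{sgn}(\sigma)\;\mean\prod_{a=1}^m L_{a,\sigma(a)}.
\]
Because all entries of $L$ are centered joint Gaussians, Wick's theorem rewrites each inner expectation as a sum, over perfect matchings $\pi$ of $\{1,\ldots,m\}$, of products of pairwise covariances $\mean[L_{i,\sigma(i)}L_{j,\sigma(j)}]$. This already vanishes for odd $m$, consistent with $D_{\vect d}$ being supported on even $m$.

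The crucial lemma I would then prove is a compatibility statement: for $i\neq j$, the covariance $\mean[L_{i,\sigma(i)}L_{j,\sigma(j)}]$ is nonzero if and only if $\sigma$ restricts to the transposition of $\{i,j\}$. This would follow from (a) the GOE covariance identity $\mean M_{ab}M_{cd} = \tfrac12(\delta_{ac}\delta_{bd}+\delta_{ad}\delta_{bc})$ inside each diagonal block $L_k$, (b) independence of distinct blocks, and (c) the fact that the only symmetry relation among entries of an off-diagonal block $L_{i,j}$ is $L_{ab}=L_{ba}$. Consequently each perfect matching $\pi$ selects a unique compatible permutation $\sigma_\pi$, namely the product of the $m/2$ disjoint transpositions encoded by $\pi$, with $\mathrm{sgn}(\sigma_\pi) = (-1)^{m/2}$. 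Interchanging the two summations then yields
\[
\mean \det L \;=\; (-1)^{m/2}\sum_{\pi}\prod_{\{i,j\}\in\pi}\mean L_{i,j}^{2},
\]
and to finish one reads off $\mean L_{i,j}^2$ from the block definition of $L$ and identifies it with the edge weight $w(\{i,j\})$: within the $k$-th group the variance is proportional to $d_k(d_k-1)$, and across groups it is constant, reproducing the weighted complete graph that defines $D_{\vect d}$.

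The main obstacle is the compatibility lemma. One must carefully rule out ``cross-contractions'' in which $\sigma$ sends $i$ or $j$ outside $\{i,j\}$ yet accidental index coincidences inside a common block still produce a nonzero Wick term. There is also a normalization check to perform: the GOE scaling $\sqrt{d_k(d_k-1)/2}$ for $L_k$ and the off-diagonal-block variances for $L_{i,j}$ must be verified to give precisely the edge weights $d_k(d_k-1)$ and $1$ appearing in the definition of $D_{\vect d}$. Once that combinatorial and normalization bookkeeping is in place, the identity $\mean\det L = D_{\vect d}(m_1,\ldots,m_r)$ is immediate.
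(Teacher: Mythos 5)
Your proposal is correct and takes essentially the same route as the paper: Leibniz expansion of $\mean \det L_{\vect d}(m_1,\ldots,m_r)$, elimination of every permutation that is not a product of $\tfrac{m}{2}$ disjoint transpositions, and identification of the surviving fixed-point-free involutions with perfect matchings, with $\mathrm{sgn}(\sigma)=(-1)^{m/2}$ and the entry variances giving the edge weights. The only difference is presentational: where you invoke Wick's theorem plus a compatibility lemma, the paper uses the elementary observation that a product of independent centered Gaussian entries has nonzero expectation only if each entry occurs as a square --- which is the same fact, and your compatibility lemma does hold, since two entries $\ell_{ab},\ell_{cd}$ of this matrix are independent (hence uncorrelated) unless $\{a,b\}=\{c,d\}$.
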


\begin{example}\label{example_segre_L}
Recall from Example \ref{example1} that the random matrix for the Segre manifold $\Xnd = \mathbb S^2\otimes \mathbb S^2\otimes \mathbb S^1 \otimes \mathbb S^1$ (i.e., $n_1=n_2 = 2$ and $n_3=n_4=1$) is 
    \begin{align*}
 L_{\mathbf 1}(2,2,1,1) = \left[
    \begin{array}{cc|cc|c|c}
        0 & 0 & \vect F_{1100} & \vect F_{1200} & \vect F_{1010} & \vect F_{1001}\\
        0 & 0 & \vect F_{2100} & \vect F_{2200} & \vect F_{0201} &\vect F_{2001} \\ \hline
        \vect F_{1100} & \vect F_{2100} & 0 & 0 & \vect F_{0110} & \vect F_{0101}\\
        \vect F_{1200} & \vect F_{2200} & 0 & 0 & \vect F_{0210} &\vect F_{0201} \\ \hline
        \vect F_{1010} & \vect F_{2010} & \vect F_{0110} & \vect F_{0210} & 0 & \vect F_{0011}\\\hline
        \vect F_{1001} & \vect F_{2001} & \vect F_{0101} & \vect F_{0201} & \vect F_{0011} & 0
    \end{array} \right],
    \end{align*}
where the entries are all i.i.d.\ standard Gaussian. We compute the expected determinant of this matrix using Theorem \ref{thm_graphs}. The corresponding graph has $n_1+n_2+n_3+n_4=6$ vertices in four groups $\mathcal I_1=\{1,2\},\mathcal I_2=\{3,4\},\mathcal I_3=\{5\},\mathcal I_4=\{6\}$:
\begin{center}
    \begin{tikzpicture}
    \node[fill = purple!30] (1) at (-1,0) [circle,draw,inner sep=2pt] {1};
    \node[fill = teal!30] (3) at (3,0) [circle,draw,inner sep=2pt] {3};
    \node[fill = teal!30] (4) at (3,2) [circle,draw,inner sep=2pt] {4};
    \node[fill = blue!30] (6) at (-1,2) [circle,draw,inner sep=2pt] {6};
    \node[fill = purple!30] (2) at (1,-1) [circle,draw,inner sep=2pt] {2};
    \node[fill = orange!30] (5) at (1,3) [circle,draw,inner sep=2pt] {5};
    \draw (1) edge node {} (3);
    \draw (1) edge node {} (4);
    \draw (1) edge node {} (5);
    \draw (1) edge node {} (6);
    \draw (2) edge node {} (3);
    \draw (2) edge node {} (4);
    \draw (2) edge node {} (5);
    \draw (2) edge node {} (6);
    \draw (3) edge node {} (5);
    \draw (3) edge node {} (6);
    \draw (4) edge node {} (5);
    \draw (4) edge node {} (6);
    \draw (5) edge node {} (6);
    \end{tikzpicture}
    \end{center}
The edges within groups all have weight zero (they can be deleted). All other edges have weight one, so $D_{\mathbf 1}(2,2,1,1)=\mean\, \det\, L_{\mathbf 1} (2,2,1,1)$ is given by the negative of the number of perfect matchings in this graph. We can match $\{1,2\}$ with $\{3,4\}$. There are two possible such matches. Or we can match $1$ with either $5$ or $6$, in which case we have to match $2$ with either $3$ or $4$. There are 4 such matches. Finally, we can also match~$2$ with either $5$ or $6$, and by symmetry there are again 4 such matches. In total these are~10 matches, which shows that~$D_{\mathbf 1}(2,2,1,1)=-10$. 

\begin{figure}
\begin{center}
\includegraphics[width = 0.6\textwidth]{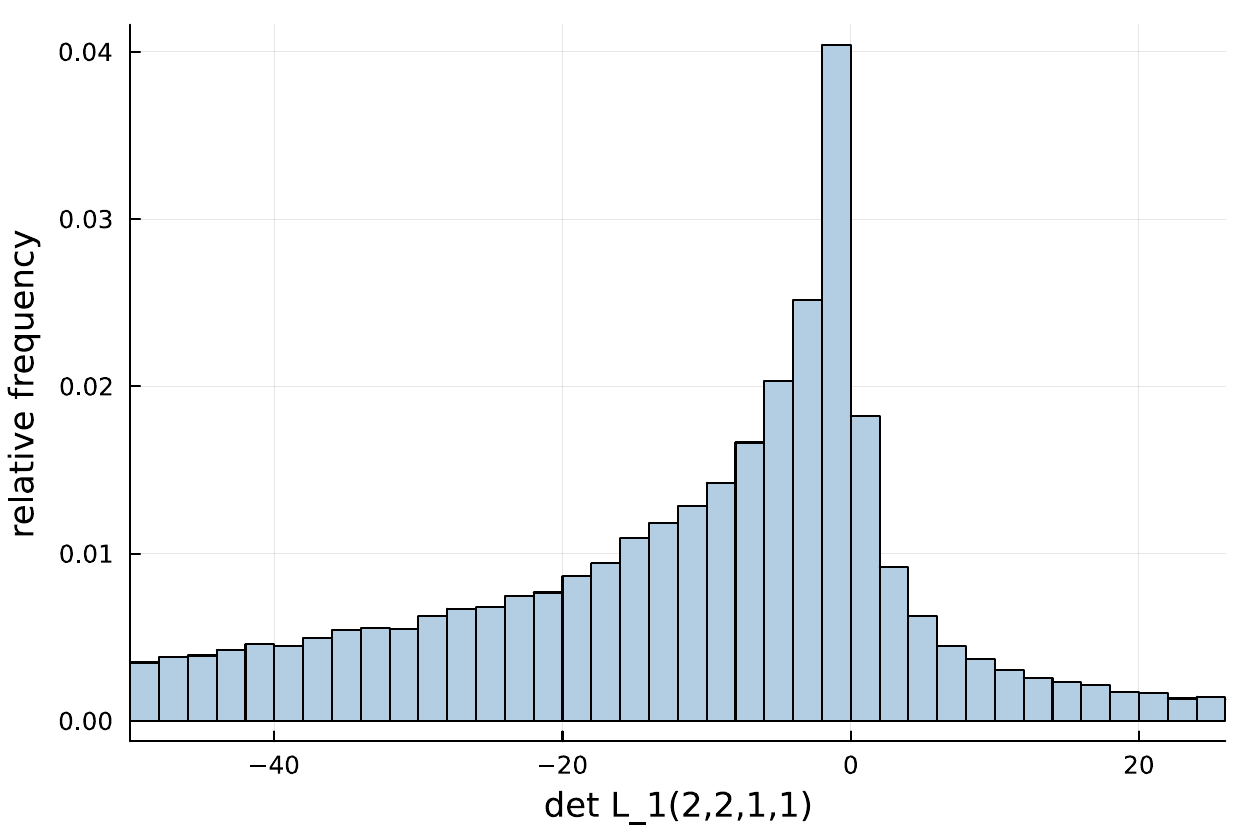}
\end{center}
\caption{The empirical distribution of $\det L_{\vect 1}(2,2,1,1)$ for $10^5$ sample points. The empirical mean of this sample is $-9.9995$. We show in Example \ref{example_segre_L} that the actual mean value is~$-10$.}
\end{figure}
\end{example}

\begin{proof}[Proof of Proposition \ref{thm_graphs}]
Let $v:=v_1+\cdots+v_r$ and write 
$$L_{\vect d}(v_1,\ldots,v_r) = (\ell_{i,j})_{1\leq i,j\leq v}.$$ Since the expectation is linear, Laplace expansion of the determinant yields 
$$\mean \;\det L_{\vect d}(v_1,\ldots,v_r) =  \sum_{\pi \in \mathfrak S_v} \mathrm{sgn}(\pi)\; \mean\prod_{i=1}^v \ell_{i,\pi(i)},$$
where $\mathfrak S_v$ is the symmetric group on $v$ elements. The $\ell_{i,\pi(i)}$ are all Gaussian with mean $\mean \ell_{i,\pi(i)} = 0$ and independent. This implies that the only terms whose expectation is not zero are those where all $\ell_{i,\pi(i)}$ appear as a square. In other words, only those expectations are not zero where $\pi\in \mathfrak S_v$ has the property that $\pi(i)\neq i$ for all $i$ and 
$\pi(i)=j$ implies $\pi(j)=i$. Such $\pi\in  \mathfrak S_v$ only exist when $v$ is even. 

If $v$ is odd, we therefore have $\mean \;\det L_{\vect d}(v_1,\ldots,v_r)=0$. Since for $v$ odd, no perfect matchings can exist, we also have $D_{\vect d}(v_1,\ldots,v_r)=0$.

If $v$ is even, on the other hand, the $\pi\in\mathfrak S_v$ with the above property are precisely products of~$\tfrac{v}{2}$ transpositions, so that
$$\mean \;\det L_{\vect d}(v_1,\ldots,v_r) = (-1)^\frac{v}{2}\sum_{\substack{\pi \in \mathfrak S_v:\\[0.2em]\pi \text{ is product of } \frac{v}{2} \text{ transpositions}}}  \mean\prod_{i=1}^v \ell_{i,\pi(i)}.$$
There is a $1:1$ correspondence between products of~$\tfrac{v}{2}$ transpositions and perfect matchings $C\subset E$, where $E$ is the set of edges in the complete graph $G=(V,E)$ on $v$ vertices. Let $C=\{(i_1,i_2), (i_3,i_4),\ldots, (i_{v-1},i_v)\}$ be the matching corresponding to $\pi$; i.e., for $j$ odd, $\pi(i_j)=i_{j+1}$. Then, using independence, we obtain
$$\mean\prod_{i=1}^v \ell_{i,\pi(i)} = \mean (\ell_{i_1,i_2}^2 \cdots \ell_{i_{v-1},i_v}^2) =  \mean \ell_{i_1,i_2}^2 \cdots \mean \ell_{i_{v-1},i_v}^2 = \sigma_{i_1,i_2}^2\cdots  \sigma_{i_{v-1},i_v}^2,$$
where $\sigma_{i_j,i_{j+1}}^2$ is the variance of $\ell_{i_j,i_{j+1}}$. By the definition of $L_{\vect d}(v_1,\ldots,v_r)$, the variance of the off-diagonal entries in the diagonal blocks is $\frac{d_k-1}{d_k}$, while the variance of the entries in the off-diagonal blocks  of $L_{\vect d}(v_1,\ldots,v_r)$ is $1$. That is:
$$\sigma_{i_j,i_{j+1}}^2 = \begin{cases}  \frac{d_k-1}{d_k}, & i_j,i_{j+1} \in\mathcal I_k\\1,& i_j,i_{j+1} \text{ are in different groups of vertices} \end{cases},$$
which shows that $\mean\prod_{i=1}^v \ell_{i,\pi(i)} = w(C)$, so  $D_{\vect d}(v_1,\ldots,v_r)=\mean \;\det L_{\vect d}(v_1,\ldots,v_r).$
\end{proof}

The last example of our paper is the computation of the curvature coefficients  $\kappa_i$ in Weyl's tube formula (\ref{weyl_tube_formula}) for the Segre manifold $\Xnd = \mathbb S^1\otimes \cdots \otimes \mathbb S^1\subset \mathbb S^{2^r -1}$.

\begin{example}\label{ex:segre2}
For the special case of the Segre manifold $\Xnd = \mathbb S^1\otimes \cdots \otimes \mathbb S^1$
we have $\vect d = \vect{1}_r = (1,\dots,1)$ and $ \vect n = \mathbf 1_r$.  In this case, Lemma \ref{lemma_volume_Xnd} yields
$$\mathrm{vol}(\Xnd) = \frac{1}{2^{r-1}}\cdot \mathrm{vol}(\mathbb S^{1}) \cdots \mathrm{vol}(\mathbb S^{1}) = 2\pi^r.$$
Furthermore, the codimension of $\Xnd$ is 
$$c=2^r - 1-r.$$ This implies that 
$\kappa_i = 2\pi^r \cdot \mathrm{vol}(\mathbb S^{c-1})\cdot \theta_i,$
where $\theta_i$ is defined as in Theorem \ref{main_thm_vol} and Lemma~\ref{lemma_kappa2i}. We compute the $\theta_i$. By Theorem~\ref{main_thm_vol}, we have
$\theta_i = \frac{\Gamma(\frac{c}{2})}{2^i\,\Gamma(i+\frac{c}{2})} \;  \sum D_{\vect d}(v_1,\ldots,v_r),$ where the sum is 
over all tuples  $(v_1,\ldots,v_r) \in \{0,1\}^r$ with
and $v_1+\cdots+v_r = 2i$. There are~$\binom{r}{2i}$ such tuples. Fix a tuple $(v_1,\ldots,v_r)$; this corresponds to the complete graph with $2i$ vertices where all edges have weight 1, and there are $(2i-1)!!$ perfect matchings on this graph. Therefore, by (\ref{def_D}) we have
$D_{\vect{1}_{2i}}(\vect{1}_{2i}) = (-1)^i (2i-1)!!$.
It follows that
$$\theta_i =(-1)^i\cdot\frac{\Gamma(\frac{c}{2})}{2^i\,\Gamma(i+\frac{c}{2})} \cdot \binom{r}{2i} \cdot  (2i-1)!!;$$
hence,
$$\kappa_i = (-1)^i \cdot 2\pi^r \cdot \mathrm{vol}(\mathbb S^{c-1}) \cdot\frac{\Gamma(\frac{c}{2})}{2^i\,\Gamma(i+\frac{c}{2})} \cdot \binom{r}{2i} \cdot  (2i-1)!! .$$
This completes the computation of the curvature coefficients of this Segre manifold.
\end{example}

\bibliographystyle{alpha}
\bibliography{math}

\end{document}